\documentclass[11pt,reqno]{amsart}

\title[]{On the Dirichlet Fractional Laplacian and Applications to the SQG Equation on Bounded Domains}

\author[E. Abdo]{Elie Abdo}
\address[E. Abdo]
{	Department of Mathematics \\
     University of California  \\
	Santa Barbara, CA 93106-3080, USA.} \email{elieabdo@ucsb.edu}

\author[Q. Lin]{Quyuan Lin}
\address[Q. Lin]
{	School of Mathematical and Statistical Sciences \\
Clemson University\\
Clemson, SC 29634, USA.} \email{quyuanl@clemson.edu}

\numberwithin{equation}{section}

\usepackage[margin=1in]{geometry}
\usepackage{amsmath, amsthm, amssymb}
\usepackage{times}
\usepackage{comment}
\usepackage{color}
\usepackage[colorlinks=true, pdfstartview=FitV, linkcolor=blue, citecolor=blue, urlcolor=blue]{hyperref}
\usepackage{hyperref}
\newcommand{\pa}{\partial}
\newcommand{\p}{\partial}
\newcommand{\la}{\label}
\newcommand{\fr}{\frac}
\newcommand{\na}{\nabla}
\newcommand{\be}{\begin{equation}}
\newcommand{\ee}{\end{equation}}
\newcommand{\ba}{\begin{array}{l}}
\newcommand{\ea}{\end{array}}

\newtheorem{thm}{Theorem}[section]
\newtheorem{prop}[thm]{Proposition}

\newtheorem{cor}[thm]{Corollary}
\newtheorem{rem}{Remark}

\newcommand{\beg}{\begin}

\newcommand{\D}{\Delta}
\renewcommand{\l}{\Lambda_D}

\usepackage{cite}
\usepackage{mathtools}
\mathtoolsset{showonlyrefs}

\usepackage{xcolor}
\usepackage{graphicx}
\usepackage[]{amsfonts}
\usepackage{amsthm}
\usepackage{amsmath}
\usepackage {amssymb}
\usepackage{bm}
\usepackage{mathrsfs}
\usepackage{setspace}
\usepackage{amsmath}


\usepackage{MnSymbol,wasysym}

\newcommand{\R}{\mathbb R}

\def\RR{{\mathbb R}}

\def\NN{\mathbb N}

\begin{document}
\begin{abstract} We investigate new properties of the fractional Dirichlet Laplacian on smooth bounded domains and establish fractional product estimates and nonlinear Poincar\'e inequalities. We also use these tools to study the long-time dynamics of the surface quasi-geostrophic equation forced by some given time-independent body forces in the presence of physical boundaries and prove the existence of a finite-dimensional global attractor.  
\end{abstract} 


\maketitle

{\bf{MSC Subject Classifications:}} 35B41, 35Q35, 35Q86\\

{\bf{Keywords:}} Dirichlet fractional Laplacian, bounded domains, product estimates, nonlinear Poincar\'e inequality, SQG equation, global attractor

\tableofcontents

\section{Introduction}

The fractional Laplacian is a nonlocal operator that appears in many drift-diffusion partial differential equations (PDE) studied in the mathematical literature, including but not limited to the surface quasi-geostrophic (SQG) \cite{caffarelli2010drift, constantin2001critical, constantin2016critical,  constantin2020estimates, constantin2018inviscid, constantin2023global, constantin1994formation, constantin2018local, constantin2015long, constantin2012nonlinear, constantin1999behavior, constantin2008regularity, ignatova2019construction, zelati2016global, kiselev2007global, stokols2020holder}, incompressible porous media (IPM) \cite{castro2009incompressible}, fractional Boussinesq \cite{yang2014global, wu2014well, wu20182d, yang20183d}, fractional magnetohydrodynamics (MHD) \cite{chae2015local, dai2020unique, dai2019class,  dong2018global, feng2024stability,  wu20182d}, and electroconvection models \cite{abdo2021long, abdo2024long, constantin2017some}. The behavior of this operator is majorly shaped by the geometry of the physical domain on which these equations are studied and by the type of boundary conditions imposed on the physical quantities whose evolutions are addressed.

In the absence of spatial boundaries (as in the cases of the whole spaces and periodic tori), the fractional powers of the Laplacian are defined as Fourier multipliers and have integral representation formulas with explicit kernels. Moreover, they commute with differential operators, their domains are identified with the usual fractional Sobolev spaces, and fractional product estimates are available in the classical Sobolev spaces. These properties wipe out major challenges in the analysis of PDE problems in which such operators are involved but fully break down when defined in the context of bounded domains with prescribed boundary conditions.

A few results were recently obtained in the literature and used to study nonlocal PDEs on bounded domains. Integro-differential representations with unexplicit kernels are derived in \cite{caffarelli2016fractional} in the case of homogeneous Dirichlet and Neumann boundary conditions. Moreover, nonlinear lower bounds, maximum principles, and commutator estimates are established in \cite{constantin2017remarks} in the case of Dirichlet boundary conditions.  

In this paper, we consider a two-dimensional bounded domain $\Omega$ with a smooth boundary, study new features fulfilled by the Dirichlet fractional Laplacian, and use them to address the global well-posedness and long-time dynamics of the forced subcritical SQG equation posed on $\Omega$. Denoting the homogeneous Dirichlet Laplacian by $-\Delta_D$ and its fractional powers by $\l^s$ where $\l = \sqrt{-\Delta_D}$, we successfully obtain the following results:
\begin{enumerate}
    \item We consider a family of nonlocal regularizers $J_{\epsilon}$ that are uniformly bounded on the domains of $\l^s$ and prove that they commute with $\l^s$ based on singular integral representation identities (Proposition \ref{prop:commute}). By making use of the Marcinkiewicz interpolation theorem, we also study the uniform-in-$\epsilon$ boundedness properties of the operators $\l^s J_{\epsilon}$ on $L^{p}$ spaces (Theorem \ref{thm:Lp}). These results serve as a major tool for constructing regularization schemes for nonlocal PDEs involving fractional powers of the Laplacian that align with the usual mollification arguments performed in the absence of spatial boundaries. 
    \item We establish several fractional product estimates to control $\l^s(fg)$ in $L^2$ depending on the regularity and boundary assumptions obeyed by $f$ and $g$ (Theorem \ref{thm:product-1}). We also derive nontrivial trilinear product estimates based on extension theorems and the properties of the fractional Laplacian on the whole space (Theorem \ref{thm:nonlinear}). These provide new tools to control the nonlinear aspects governing many nonlocal nonlinear PDEs.
    \item We prove a nonlinear Poincar\'e inequality on $L^p$ spaces that generalizes an analogous result obtained in \cite{constantin2014unique} on the periodic torus to the case of bounded domains for a wider range of $p$ (Theorem \ref{thm:poin}).  Our proof is based on the pointwise C\'ordoba-C\'ordoba inequality and a nontrivial analysis of the cases in which $|q|^\beta$ belongs to the domain of $\l^s$ provided that $q$ itself is there (Proposition \ref{prop:regularity}). These nonlinear lower bounds are useful for studying the long-time dynamics of solutions to nonlinear PDEs that are diffused by the Dirichlet fractional Laplacian.  
    \item Finally, we consider the two-dimensional subcritical SQG equation on $\Omega$ forced by time-independent body forces, and we address the existence of weak and strong unique solutions (Theorem~\ref{thm:sqg-1} and \ref{thm:sqg-2}). We also prove the existence of a finite-dimensional global attractor to this system (Theorem~\ref{thm:sqg-3} and \ref{thm:sqg-4}). The result is new and our analysis is majorly based on the previously listed tools.  
\end{enumerate}

This paper is organized as follows. In Section \ref{sec2}, we introduce the functional spaces, define the fractional powers of the Dirichlet Laplacian, list some of their properties, and recall some fractional interpolation inequalities that will be used in the sequel. 
In Section \ref{sec3}, we address the properties of smoothing regularizers on the domains of the fractional powers of the Laplacian. Section \ref{sec4} is dedicated to fractional bilinear and trilinear product estimates. In Section \ref{sec5}, we prove a nonlinear Poincar\'e inequality for the fractional powers of the Laplacian, adapted to the case of bounded domains with smooth boundaries. Finally, we apply our theory  to the subcritical forced SQG equation in Section \ref{sec6} and show the existence of weak and strong unique solutions and obtain the existence of a finite-dimensional global attractor. 

\section{Preliminaries} \la{sec2}

\subsection{Functional Setting.} Let $\Omega \subset \R^2$ be a bounded domain with smooth boundary. For $1 \le p \le \infty$, we denote by $L^p(\Omega)$ the Lebesgue spaces of measurable functions $f$ from $\Omega$ to $\R$ (or $\RR^2)$ such that 
\be 
\|f\|_{L^p} = \left(\int_{\Omega} |f(x)|^p dx\right)^{1/p} <\infty
\ee if $p \in [1, \infty)$ and 
\be 
\|f\|_{L^{\infty}} = {\mathrm{esssup}}_{\Omega}  |f| < \infty
\ee if $p = \infty$. The $L^2$ inner product is denoted by $(\cdot,\cdot)_{L^2}$. 

For $k \in \NN$, we denote by $H^k(\Omega)$ the Sobolev spaces of measurable functions $f$ from $\Omega$ to $\R$ (or $\RR^2)$ with weak derivatives of order $k$ such that  
\be 
\|f\|_{H^k}^2 = \sum\limits_{|\alpha| \le k} \|D^{\alpha}f\|_{L^2}^2 < \infty,
\ee and by $H_{0}^{1}(\Omega)$ the closure of $C_0^{\infty}(\Omega)$ in $H^1(\Omega)$. 

For a Banach space $(X, \|\cdot\|_{X})$ and $p\in [1,\infty]$, we consider the Lebesgue spaces $ L^p(0,T; X)$ of functions $f$  from $X$ to $\R$ (or $\RR^2)$ satisfying 
\be 
\int_{0}^{T} \|f\|_{X}^p dt  <\infty
\ee with the usual convention when $p = \infty$. 

\subsection{Fractional Powers of the Laplacian}  We denote by $\Delta_D$ the Laplacian operator with homogeneous Dirichlet boundary conditions. We note that $-\Delta_D$  is defined on $\mathcal{D}(-\D_D) = H^2(\Omega) \cap H_0^1(\Omega)$, and is positive and self-adjoint in $L^2(\Omega)$. Then there exists an orthonormal basis of $L^2(\Omega)$ consisting of eigenfunctions $\left\{w_j\right\}_{j=1}^{\infty} \subset H_0^1(\Omega)$ of $-\Delta_D$ satisfying
\be
-\Delta_D w_j = \lambda_j w_j
\ee
where the eigenvalues $\lambda_j$ obey $0 < \lambda_1 \leq ... \leq \lambda_j \le ...\rightarrow \infty$. 
For $s \in \R$, we define the fractional Laplacian operator of order $s$, denoted by $\l^s$, as
\be \la{maindef}
\l^s h= \sum_{j=1}^{\infty} \lambda_j^{\fr{s}{2}} (h, w_j)_{L^2} w_j,
\ee with domain 
\be 
\mathcal{D}(\l^s) = \left\{h  : \|\l^{s} h\|_{L^2}^2 := \sum\limits_{j=1}^\infty \lambda_j^{s}(h, w_j)_{L^2}^2 < \infty \right\}.
\ee 
In particular, when $s>0$ the space $\mathcal D(\l^{-s})$ is understood as the dual space of $\mathcal D(\l^{s})$. It is evident that  $\mathcal D(\l^{s_2}) \subset \mathcal D(\l^{s_1})$ provided that $s_1 \le s \le s_2$.
For $s \in [0,1]$, we identify the domains $\mathcal{D}(\l^s)$ with the usual Sobolev spaces as follows,
\be 
\mathcal{D}(\l^s) = \begin{cases} H^{s}(\Omega) \hspace{8cm} \mathrm{if} \; s \in [0, \fr{1}{2}), 
\\ H_{00}^{\fr{1}{2}} (\Omega) = \left\{h \in H_0^\fr{1}{2} (\Omega) : h/\sqrt{d(x)} \in L^2(\Omega)\right\} \hspace{2cm} \mathrm{if} \; s = \fr{1}{2},
\\H_{0}^{s} (\Omega) \hspace{8cm} \mathrm{if} \; s \in (\fr{1}{2},1], 
\end{cases}
\ee where $H_0^{s}(\Omega)$ is the Hilbert subspace of $H^s(\Omega)$ with vanishing boundary trace elements, and $d(x)$ is the distance to the boundary function.

We recall the identity 
\be 
\lambda^{\fr{s}{2}} = c_s \int_{0}^{\infty} t^{-1-\fr{s}{2}} (1-e^{-t\lambda}) dt
\ee 
that holds for $s \in (0,2)$, where $c_s$ is given by
\be 
1 = c_s \int_{0}^{\infty} t^{-1-\fr{s}{2}} (1-e^{-t}) dt.
\ee Using the latter, we  obtain the integral representation 
\be \la{intrep}
(\l^s f)(x) = c_s \int_{0}^{\infty} [f(x) - e^{t\Delta_D}f(x)]t^{-1 - \fr{s}{2}} dt
\ee for $f \in \mathcal{D}(\l^s)$ and $s \in (0,2)$. Here the heat operator $e^{t\Delta_D}$ is defined as
\be \label{eqn:etdelta}
(e^{t\Delta_D}f)(x) = \int_{\Omega} H_D(x, y, t) f(y) dy
\ee with kernel $H_D(x,y, t)$ given by 
\be \label{eqn:heat-kernel}
H_D(x,y,t) = \sum_{j=1}^{\infty} e^{-t\lambda_j} w_j(x) w_j(y).
\ee

\beg{prop} \cite{abdo2023long} The following identities hold:
\begin{enumerate}
\item[(i)] Let $\alpha, \beta, s \in \R$. For $f \in \mathcal{D}(\l^{\alpha}) \cap \mathcal{D}(\l^{\alpha - s}) $ and $g \in \mathcal{D}(\l^{\beta + s}) \cap \mathcal{D}(\l^{\beta})$, we have
\be 
(\l^{\alpha}f, \l^{\beta}g)_{L^2} = (\l^{\alpha - s}f, \l^{\beta + s}g)_{L^2}.
\ee
\item[(ii)] Let $\alpha, \beta \in \R$. For $f \in \mathcal{D}(\l^{\alpha + 1})$ and $g \in \mathcal{D}(\l^{\beta + 1})$, we have 
\be 
(\l^{\alpha + 1}f, \l^{\beta + 1}g)_{L^2} = (\na \l^{\alpha}f, \na \l^{\beta}g)_{L^2}.
\ee
\item[(iii)] Let $s \in (0,1)$. For $\psi \in \mathcal{D}(\l^{s})$, we have 
\be \la{prodfor1}
\|\l^{s} \psi\|_{L^2}^2
= \int_{\Omega} \int_{\Omega} (\psi(x) - \psi(y))^2 K_{s}(x,y) dxdy 
+ \int_{\Omega} \psi(x)^2 B_{s}(x) dx
\ee where the kernels $K_{s}$ and $B_{s}$ are given by  
\be 
K_s(x,y) := 2c_{2s} \int_{0}^{\infty} H(x,y,t) t^{-1-s} dt \quad \text{for }x \ne y,
\ee 
\be 
B_s(x) = c_{2s} \int_{0}^{\infty} \left[1 - e^{t\Delta}1(x)\right] t^{-1-s}dt \quad \text{for }x\in \Omega,
\ee and obey
\be \la{prodfor2}
0 \le K_{s}(x,y) \le \fr{C_s}{|x-y|^{2 + 2s}} \quad \text{for }x \ne y, 
\ee 
\be \la{prodfor3}
B_{s}(x) \ge 0 \quad \text{for }x\in \Omega.
\ee 
\end{enumerate}
\end{prop}

\subsection{Fractional Sobolev spaces and Brezis-Mironescu interpolation inequality.} For $s\in(0,1)$ and $p\in[1,\infty)$, the fractional Sobolev space $W^{s, p}(\Omega)$ is defined as
\be \label{def:frac-sob}
W^{s, p} (\Omega) = \left\{ v \in L^p(\Omega): \|v\|_{W^{s, p}}= \left(\|v\|_{L^p}^p + \int_{\Omega} \int_{\Omega} \fr{|v(x) - v(y)|^p}{|x-y|^{2+sp}} dxdy \right)^{\fr{1}{p}}<\infty \right\}.
\ee 
For a noninteger $s>1$, we write $s=m+\sigma$ where $m$ is a positive integer and $\sigma\in(0,1)$, and we define the space $W^{s, p}(\Omega)$ by
\be 
W^{s, p} (\Omega) = \left\{ v \in W^{m,p}(\Omega): \|v\|_{W^{s, p}}= \left(\|v\|_{W^{m,p}}^p + \sum\limits_{|\alpha|=m} \|D^\alpha v\|_{W^{\sigma,p}}^p \right)^{\fr{1}{p}}<\infty \right\}.
\ee 
For more details about fractional Sobolev spaces, we refer the readers to \cite{di2012hitchhiker}.

Let $1 \le p, p_1, p_2 \le \infty$ with $p_2 \ne 1$, and let $s, s_1, s_2$ be nonnegative real numbers such that $s_1 \leq s \le s_2$. Let $\theta \in (0,1)$ be such that $s = \theta s_1 + (1-\theta) s_2$ and $\fr{1}{p} = \fr{\theta}{p_1} + \fr{1-\theta}{p_2}$. Then there exists a positive universal constant $C$ such that the following interpolation inequality
\be 
\|f\|_{W^{s,p}} \le C\|f\|_{W^{s_1, p_1}}^{\theta}\|f\|_{W^{s_2, p_2}}^{1-\theta}
\ee holds for any $f \in W^{s_1, p_1}(\Omega) \cap W^{s_2, p_2} (\Omega)$. We refer the reader to \cite{brezis2018gagliardo} for a detailed proof.

\section{Regularizers} \la{sec3} For $\epsilon \in (0,1)$, we let $J_{\epsilon}$ be the spectrally regularizing operator defined in terms of the heat semigroup $e^{t\Delta_D}$ by 
\be 
J_{\epsilon} \theta(x)
= \frac{-1}{\ln \epsilon} \int_{\epsilon}^{\fr{1}{\epsilon}} \frac{e^{t\Delta_D}\theta(x)}{t} dt.
\ee The operator $J_{\epsilon}$ obeys the following properties:

\begin{prop} \label{regop}
Let $s$ be a real number and $\epsilon \in (0,1)$ be a small positive number. There exists a positive number $C$ depending only on $s$ such that
\be \label{prop32}
\|\l^s J_{\epsilon} \theta\|_{L^2} \le C\|\l^s\theta\|_{L^2},
\ee 
and 
\be 
\lim\limits_{\epsilon \to 0^+} \|\l^s (J_{\epsilon} \theta - \theta)\|_{L^2} = 0
\ee provided that $\theta \in \mathcal{D}(\l^{s})$. For $s\geq 0$ and $\beta\in \mathbb R$, it also holds that 
\begin{equation}
    \|\Lambda_D^{s+\beta} J_{\epsilon} \theta\|_{L^2} \le C \epsilon^{-\frac{s}2} \|\l^{\beta}\theta\|_{L^2}
\end{equation} for $\theta\in \mathcal D(\l^\beta)$.
\end{prop}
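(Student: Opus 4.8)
\emph{Proof strategy.} The natural plan is to diagonalize $J_\epsilon$ in the eigenbasis $\{w_j\}$. Since $e^{t\Delta_D}w_j=e^{-t\lambda_j}w_j$ and $e^{t\Delta_D}$ is a self-adjoint contraction on $L^2(\Omega)$, the Bochner integral defining $J_\epsilon\theta$ converges for every $\theta\in L^2(\Omega)$, and testing against $w_j$ gives
\be
J_\epsilon\theta=\sum_{j\ge1}m_\epsilon(\lambda_j)\,(\theta,w_j)_{L^2}\,w_j,\qquad m_\epsilon(\lambda):=\frac{-1}{\ln\epsilon}\int_\epsilon^{1/\epsilon}\frac{e^{-t\lambda}}{t}\,dt .
\ee
Thus $J_\epsilon$ is a (nonnegative, as $-\ln\epsilon>0$) spectral multiplier, and by the definition of $\l^{s'}$ together with Parseval, $\|\l^{s'}J_\epsilon\theta\|_{L^2}^2=\sum_j\lambda_j^{s'}m_\epsilon(\lambda_j)^2(\theta,w_j)_{L^2}^2$ for every $s'\in\R$. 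The proposition therefore reduces entirely to pointwise bounds on the symbol $m_\epsilon$.

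\emph{First and third inequalities.} For the first bound, $e^{-t\lambda}\le1$ gives $m_\epsilon(\lambda)\le\frac{-1}{\ln\epsilon}\int_\epsilon^{1/\epsilon}\frac{dt}{t}=2$ uniformly in $\lambda>0$ and $\epsilon\in(0,1)$; Parseval with $s'=s$ then yields $\|\l^sJ_\epsilon\theta\|_{L^2}\le2\|\l^s\theta\|_{L^2}$, and in particular $J_\epsilon$ maps $\mathcal{D}(\l^s)$ into itself. For the third bound, since $t\ge\epsilon$ on the interval of integration we have $e^{-t\lambda}\le e^{-\epsilon\lambda}$, hence $m_\epsilon(\lambda)\le2e^{-\epsilon\lambda}$; combining this with the elementary inequality $r^{s/2}e^{-r}\le(s/2)^{s/2}e^{-s/2}=:\kappa_s$ for $r>0$ (with $\kappa_0=1$), evaluated at $r=\epsilon\lambda$, gives $\lambda^{s/2}m_\epsilon(\lambda)\le2\kappa_s\,\epsilon^{-s/2}$. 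Inserting this into Parseval with $s'=s+\beta$ and factoring $\lambda_j^{s+\beta}=\lambda_j^s\,\lambda_j^\beta$ produces $\|\l^{s+\beta}J_\epsilon\theta\|_{L^2}\le2\kappa_s\,\epsilon^{-s/2}\|\l^\beta\theta\|_{L^2}$ for $\theta\in\mathcal{D}(\l^\beta)$.

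\emph{Second inequality (convergence).} Here I would first prove the pointwise limit $m_\epsilon(\lambda)\to1$ as $\epsilon\to0^+$ for each fixed $\lambda>0$. Substituting $u=t\lambda$, $m_\epsilon(\lambda)=\frac{-1}{\ln\epsilon}\int_{\epsilon\lambda}^{\lambda/\epsilon}\frac{e^{-u}}{u}\,du$; splitting at $u=1$, the tail $\int_1^{\lambda/\epsilon}\frac{e^{-u}}{u}du$ stays bounded, while on $(\epsilon\lambda,1)$ one writes $\frac{e^{-u}}{u}=\frac1u+\frac{e^{-u}-1}{u}$, the first term contributing $-\ln(\epsilon\lambda)=-\ln\epsilon-\ln\lambda$ and the second being bounded since $u\mapsto(e^{-u}-1)/u$ is bounded near $0$. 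Hence $\int_{\epsilon\lambda}^{\lambda/\epsilon}\frac{e^{-u}}{u}du=-\ln\epsilon+O_\lambda(1)$ and $m_\epsilon(\lambda)=1+O_\lambda(1/\ln\epsilon)\to1$. Since moreover $|m_\epsilon(\lambda)-1|\le3$ for all $\lambda,\epsilon$, the terms of $\|\l^s(J_\epsilon\theta-\theta)\|_{L^2}^2=\sum_j\lambda_j^s\big(m_\epsilon(\lambda_j)-1\big)^2(\theta,w_j)_{L^2}^2$ are dominated by the summable sequence $9\,\lambda_j^s(\theta,w_j)_{L^2}^2$ — summable precisely because $\theta\in\mathcal{D}(\l^s)$ — so the dominated convergence theorem for series gives the claimed limit.

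\emph{Main difficulty.} All three statements hinge on the elementary symbol estimates $m_\epsilon(\lambda)\le2$ and $m_\epsilon(\lambda)\le2e^{-\epsilon\lambda}$ together with Parseval; the only point requiring genuine care is extracting the $-\ln\epsilon$ asymptotics of the incomplete exponential integral sharply enough to see it cancel the prefactor in the convergence claim, and then justifying the interchange of limit and infinite sum, which is exactly what the uniform bound $m_\epsilon\le2$ provides.
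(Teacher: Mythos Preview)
Your proof is correct. The paper itself omits the proof of this proposition, deferring instead to a lemma in the cited reference \cite{elie}; your spectral-multiplier argument---diagonalizing $J_\epsilon$ on the eigenbasis, reducing the three claims to the elementary symbol bounds $0\le m_\epsilon(\lambda)\le 2$, $m_\epsilon(\lambda)\le 2e^{-\epsilon\lambda}$, and the pointwise limit $m_\epsilon(\lambda)\to 1$, and closing with dominated convergence---is exactly the natural approach one would expect such a reference to take.
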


The proof of Proposition \ref{regop} follows the proof of Lemma in \cite{elie} and will be omitted here.

\beg{prop}\label{prop:commute}
{
Let $\epsilon \in (0,1)$, $s>0$, and $f \in \mathcal{D}(\l^s)$. Then 
\be 
\l^s J_{\epsilon} f (x) = J_{\epsilon} \l^s f(x)
\ee for a.e. $x \in \Omega$. In other words, the operators $\l^s$ and $J_{\epsilon}$ commute. 
}
\end{prop}

\beg{proof} We start with the case when $s\in (0,2)$. For $\eta \in (0,1)$, we define the truncated fractional Laplacian 
\be \label{eqn:trun-Lap}
(\l^s)_{\eta} f(x) = c_s \int_{\eta}^{\infty}  [f(x) - e^{t\Delta_D} f(x)] t^{-1-\frac{s}{2}} dt.
\ee We have 
\be 
\beg{aligned}
(\l^s)_{\eta} J_{\epsilon} f(x)
&= c_s \int_{\eta}^{\infty} [J_{\epsilon} f(x) - e^{t\Delta_D} J_{\epsilon}f(x)] t^{-1-\frac{s}{2}} dt 
\\&=-\frac{c_s}{\ln \epsilon} \int_{\eta}^{\infty} \left[\left(\int_{\epsilon}^{\frac{1}{\epsilon}} \frac{e^{\gamma \Delta_D}f(x)}{\gamma} d\gamma \right) - \left(\int_{\epsilon}^{\frac{1}{\epsilon}}\frac{e^{(t+\gamma)\Delta_D}f(x)}{\gamma} d\gamma \right)\right]t^{-1-\frac{s}{2}}dt
\\&= -\frac{c_s}{\ln \epsilon} \int_{\eta}^{\infty} \int_{\epsilon}^{\frac{1}{\epsilon}}e^{\gamma \Delta_D} [f(x) - e^{t\Delta_D} f(x)] t^{-1-\frac{s}{2}} \gamma^{-1} d\gamma dt
\\&= -\frac{1}{\ln \epsilon} \int_{\epsilon}^{\frac{1}{\epsilon}} \frac{e^{\gamma \Delta_D}}{\gamma} \left(c_s \int_{\eta}^{\infty} [f(x) - e^{t\Delta_D}f(x) ]t^{-1-\frac{s}{2}} dt\right) d\gamma
\\&= J_{\epsilon} (\l^s)_{\eta} f(x).
\end{aligned}
\ee We point out that interchanging integrals in the above calculations is allowed by Fubini's Theorem as the integrands do not have any singularities on the domains of integration and all these above integrals are actually bounded by a constant multiple of $\|f\|_{L^1(\Omega)}$ (where the constant depends on $\epsilon$ and $\eta$). Since $(\l^s)_{\eta} J_{\epsilon} f$ converges in $\eta$ to $\l^s J_{\epsilon} f$ strongly in $L^2(\Omega)$ (see \cite{caffarelli2016fractional}),  we infer the existence of a subsequence $(\l^s)_{\eta_k} J_{\epsilon} f$ that converges pointwise to $\l^s J_{\epsilon} f$ for a.e. $x \in \Omega$. Moreover, in view of the linearity of  the operator $J_{\epsilon}$ and its uniform-in-$\epsilon$ boundedness  on $L^2$, we have 
\be 
\|J_{\epsilon} (\l^s)_{\eta_k} f - J_{\epsilon} \l^s f\|_{L^2} \le C\|(\l^s)_{\eta_k} f - \l^s f\|_{L^2}.
\ee But $(\l^s)_{\eta_k} f$ converges strongly in $L^2$ to $\l^s f$, so $J_{\epsilon} (\l^s)_{\eta_k} f$ converges strongly in $L^2$ to $J_{\epsilon} \l^s f$. Consequently, there exists another subsequence $J_{\epsilon} (\l^s)_{{\eta_k}_l} f$ 
that converges to $J_{\epsilon} \l^s f$  a.e. in $\Omega$. By making use of the identity $(\l^s)_{{\eta_k}_l} J_{\epsilon} f = J_{\epsilon} (\l^s)_{{\eta_k}_l}  f$ and taking the limit in $l$, we deduce that $\l^s J_{\epsilon} f(x) = J_{\epsilon} \l^s f(x)$ for a.e. $x \in \Omega$ when $s\in (0,2)$.

Now we consider the case $s=2$. Using the fact $\l^2 = -\Delta_D$, we write $\l^2 J_\epsilon f(x) = -\Delta_D J_\epsilon f(x)$. We prove that $-\Delta_D e^{t\Delta_D} f(x) = e^{t\Delta_D} (-\Delta_D) f(x)$ for all $t>0$ and infer that $\l^2 J_\epsilon f(x) = J_\epsilon \l^2 f(x)$. Indeed, by virtue of \eqref{eqn:heat-kernel},  we have
\begin{align*}
    -\Delta_{D,x} H_D(x,y,t) &= \sum\limits_{j=1}^\infty e^{-t\lambda_j} (-\Delta_D) w_j(x) w_j (y) =  \sum\limits_{j=1}^\infty e^{-t\lambda_j} \lambda_j w_j(x) w_j(y) 
    \\
    &= \sum\limits_{j=1}^\infty e^{-t\lambda_j}  w_j(x) (-\Delta_D) w_j (y) = -\Delta_{D,y} H_D(x,y,t)
\end{align*}
for $t>0$. This identity, together with \eqref{eqn:etdelta}, yield
\begin{align*}
    -\Delta_D e^{t\Delta_D} f(x) &=  \int_\Omega (- \Delta_{D,x})H_D(x,y,t) f(y) dy =  \int_\Omega (- \Delta_{D,y})H_D(x,y,t) f(y) dy 
    \\
    &= \int_\Omega H_D(x,y,t) (-\Delta_{D}) f(y) dy = e^{t\Delta_D} (-\Delta_D) f(x),
\end{align*}
after integrating by parts and making use of the vanishing of the heat kernel on the boundary of $\Omega$ together with the homogeneous Dirichlet boundary conditions obeyed by $f$.

Next, we consider the case $s\in (2,4]$.  Since $\l^s$ and $J_{\epsilon}$ commute for $s \in (0,2]$, we have 
\begin{equation}
    \l^s J_\epsilon f(x) = \l^{s-2} \l^{2} J_\epsilon f(x) =\l^{s-2} J_\epsilon \l^{2} f(x) = J_\epsilon \l^{s-2} \l^{2} f(x) =  J_\epsilon\l^s f(x).
\end{equation} By repeating this procedure,  we obtain that $\l^s J_{\epsilon} f (x) = J_{\epsilon} \l^s f(x)$ for all $s>0$ provided that $f\in \mathcal D(\l^s)$.

\end{proof}

\beg{prop}\label{prop:Lp}
\label{lpb} The operator $J_{\epsilon}$ is of strong type $(p,p)$ for any $p \in [1,\infty]$. In other words, for any $p \in [1,\infty]$, there is a positive constant $C>0$ depending only on $p$
such that 
\be 
\|J_{\epsilon} \theta\|_{L^p} \le C\|\theta\|_{L^p}
\ee for any $\theta \in L^p$.
\end{prop}

\begin{proof}
    The boundedness of $J_{\epsilon}$ on $L^2$ follows from Proposition \ref{regop}. Moreover, $J_{\epsilon}$ is bounded on $L^{\infty}$ and $L^1$, a fact that follows from the maximum principle. Indeed, we have 
    $$
|J_{\epsilon} \theta (x)| 
= \left|\frac1{\ln \epsilon} \int_{\epsilon}^{\frac{1}{\epsilon}} t^{-1} \int_{\Omega} H_D(x,y,t)\theta(y) dy dt 
\right|
\le \|\theta\|_{L^{\infty}} \left|\frac1{\ln \epsilon} \int_{\epsilon}^{\frac{1}{\epsilon}} t^{-1}  dt 
\right| \le 2\|\theta\|_{L^{\infty}}
    $$ for any $\theta \in L^{\infty}$, and 
      $$
\int_{\Omega} |J_{\epsilon} \theta (x)| dx
\le 
\frac{1}{|\ln \epsilon|} \int_{\epsilon}^{\frac{1}{\epsilon}} t^{-1} \int_{\Omega} \left(\int_{\Omega} H_D(x,y,t) dx\right) |\theta(y)| dy dt 
\le \|\theta\|_{L^{1}} \frac1{|\ln \epsilon|} \int_{\epsilon}^{\frac{1}{\epsilon}} t^{-1}  dt 
 =2\|\theta\|_{L^{1}}
    $$ for any $\theta \in L^1$.
    By the Marcinkiewicz interpolation theorem, we infer that $J_{\epsilon}$ is bounded on $L^p$ spaces for any $p \in [1,\infty]$. 
\end{proof}

Propositions~\ref{prop:commute} and \ref{prop:Lp} together imply the following theorem.

\begin{thm}\label{thm:Lp}
   {
   Let $\epsilon \in (0,1)$, $s>0$, $p \in [1,\infty]$. For $f\in\mathcal D(\l^s)$ such that $\l^s f \in L^p$, there is a positive constant $C>0$ depending only on $p$ 
such that 
\begin{equation}
    \|\Lambda_D^s J_\epsilon f \|_{L^p} \leq C \|\Lambda_D^s f \|_{L^p}.
\end{equation}
}
\end{thm}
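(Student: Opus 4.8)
The plan is to simply combine the two previous results, Proposition~\ref{prop:commute} and Proposition~\ref{prop:Lp}, which is exactly what the sentence preceding the theorem announces. Since $f \in \mathcal{D}(\l^s)$ with $s > 0$, Proposition~\ref{prop:commute} applies and gives $\l^s J_\epsilon f(x) = J_\epsilon \l^s f(x)$ for a.e. $x \in \Omega$. Hence as elements of $L^p(\Omega)$ we have $\l^s J_\epsilon f = J_\epsilon(\l^s f)$, using the hypothesis $\l^s f \in L^p$.

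Then I would invoke Proposition~\ref{prop:Lp} with $\theta = \l^s f \in L^p$: there is $C = C(p) > 0$ such that $\|J_\epsilon(\l^s f)\|_{L^p} \le C\|\l^s f\|_{L^p}$. Chaining the two facts yields
\begin{equation}
\|\l^s J_\epsilon f\|_{L^p} = \|J_\epsilon(\l^s f)\|_{L^p} \le C\|\l^s f\|_{L^p},
\end{equation}
which is the claimed estimate.

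There is essentially no obstacle here; the only point requiring a word of care is the passage from the a.e.\ pointwise identity of Proposition~\ref{prop:commute} to an identity in $L^p$. This is immediate because two measurable functions agreeing a.e.\ represent the same $L^p$ element, and the right-hand side $J_\epsilon(\l^s f)$ is a genuine $L^p$ function by Proposition~\ref{prop:Lp} since $\l^s f \in L^p$; the finiteness of $\|\l^s J_\epsilon f\|_{L^p}$ then follows a posteriori rather than needing to be assumed. One might also remark that the constant $C$ inherited from Proposition~\ref{prop:Lp} is in fact universal (one may take $C = 2$) and in particular independent of $\epsilon$, which is the feature that makes this estimate useful for regularization schemes.
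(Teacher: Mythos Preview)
Your proof is correct and follows exactly the approach indicated by the paper, which simply states that the theorem is an immediate consequence of Propositions~\ref{prop:commute} and \ref{prop:Lp} without writing out the details. Your added remarks about the a.e.\ identity passing to $L^p$ and about the constant being uniform in $\epsilon$ are accurate and appropriate.
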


\section{Fractional Product and Nonlinear Estimates} \la{sec4}

We first prove product estimates in fractional Sobolev spaces. 

\begin{prop}\label{prop:product-1}
    Let $\beta \in (0,1)$ and $p\in[1,\infty)$. For smooth functions $g$ and $h$ defined on $\Omega$, it holds that
    \begin{equation}\label{product:1}
        \|gh\|_{W^{\beta,p}} \leq C \left(\|g\|_{W^{\frac2{pk_1}+\beta,\frac{pk_1}{k_1-1}}} \|h\|_{L^{pk_1}} + \|g\|_{L^{pk_2}}\|h\|_{W^{\frac2{pk_2}+\beta,\frac{pk_2}{k_2-1}}}  \right)
    \end{equation}
    for all $k_1, k_2 \in (\frac{2}{p(1-\beta)},\infty]$.
\end{prop}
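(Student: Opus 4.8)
The plan is to work directly from the Gagliardo seminorm definition of $W^{\beta,p}$ and split the difference $g(x)h(x) - g(y)h(y)$ using the standard product decomposition
\[
g(x)h(x) - g(y)h(y) = (g(x)-g(y))h(x) + g(y)(h(x)-h(y)).
\]
Raising to the $p$-th power and integrating against $|x-y|^{-2-\beta p}\,dx\,dy$, it suffices to estimate the two symmetric terms $\int\!\!\int |g(x)-g(y)|^p |h(x)|^p |x-y|^{-2-\beta p}$ and $\int\!\!\int |g(y)|^p |h(x)-h(y)|^p |x-y|^{-2-\beta p}$; the $L^p$ part of the norm is trivially controlled by $\|g\|_{L^{pk_i}}\|h\|_{L^{pk_i'}}$ via H\"older and Sobolev embedding, so I will focus on the seminorm. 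By symmetry it is enough to treat the first term (with exponent $k_1$), the second being identical with $k_1$ replaced by $k_2$ and the roles of $g,h$ swapped.

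For the first term I would apply H\"older's inequality in the $(x,y)$ double integral with exponents $k_1$ and $k_1/(k_1-1)$, pulling $|h(x)|^p$ into $L^{k_1}$: this gives
\[
\Bigl(\int_\Omega\!\int_\Omega |h(x)|^{pk_1}\,dx\,dy\Bigr)^{1/k_1}\!\cdot\!\Bigl(\int_\Omega\!\int_\Omega \frac{|g(x)-g(y)|^{pk_1/(k_1-1)}}{|x-y|^{(2+\beta p)k_1/(k_1-1)}}\,dx\,dy\Bigr)^{(k_1-1)/k_1}.
\]
Wait — one must be careful, since integrating $|h(x)|^{pk_1}$ in $y$ over $\Omega$ only produces $|\Omega|\,\|h\|_{L^{pk_1}}^{pk_1}$, which is fine because $\Omega$ is bounded; the constant absorbs $|\Omega|$. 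For the second factor, the key point is that the exponent on $|x-y|$ in the denominator is $(2+\beta p)\tfrac{k_1}{k_1-1}$, and for this to be the correct Gagliardo exponent of a space $W^{\sigma,q}$ with $q = \tfrac{pk_1}{k_1-1}$ one needs $2 + \beta p)\tfrac{k_1}{k_1-1} = 2 + \sigma q$, i.e. $\sigma = \beta + \tfrac{2}{pk_1} - \tfrac{2(k_1-1)}{pk_1\cdot k_1}$... let me instead just match: with $q=\tfrac{pk_1}{k_1-1}$, solving $2+\sigma q = (2+\beta p)\tfrac{k_1}{k_1-1}$ gives $\sigma q = 2\bigl(\tfrac{k_1}{k_1-1}-1\bigr) + \beta p\tfrac{k_1}{k_1-1} = \tfrac{2}{k_1-1} + \beta q$, hence $\sigma = \beta + \tfrac{2}{q(k_1-1)} = \beta + \tfrac{2}{pk_1}$. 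This is exactly the index appearing in the statement, so the second factor is precisely $\|g\|_{W^{\frac{2}{pk_1}+\beta,\, pk_1/(k_1-1)}}^p$. Combining the two factors, the first term is bounded by $C\|g\|_{W^{\frac{2}{pk_1}+\beta,\,pk_1/(k_1-1)}}^p \|h\|_{L^{pk_1}}^p$, and symmetrically for the second; taking $p$-th roots and adding finishes the proof.

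The one genuine point requiring care — and I expect it to be the main obstacle — is the constraint $k_1 \in \bigl(\tfrac{2}{p(1-\beta)},\infty\bigr]$. This is exactly the condition guaranteeing that the fractional smoothness index $\sigma = \beta + \tfrac{2}{pk_1}$ stays strictly below $1$ (so that $W^{\sigma,q}(\Omega)$ is a genuine fractional Sobolev space and the Gagliardo seminorm identity is the right one): indeed $\sigma < 1 \iff \tfrac{2}{pk_1} < 1-\beta \iff k_1 > \tfrac{2}{p(1-\beta)}$. When $k_1=\infty$ one interprets $pk_1/(k_1-1) = p$ and the index as $\beta$, recovering the familiar $L^\infty$-type product estimate $\|gh\|_{W^{\beta,p}}\lesssim \|g\|_{W^{\beta,p}}\|h\|_{L^\infty} + \|g\|_{L^\infty}\|h\|_{W^{\beta,p}}$; I would treat this endpoint case separately by a direct computation. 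One should also double-check that the H\"older split is legitimate, i.e. that $pk_i/(k_i-1) \geq 1$, which holds automatically, and that all integrals are finite for smooth $g,h$ on the bounded domain $\Omega$, which justifies every manipulation above without needing an approximation argument.
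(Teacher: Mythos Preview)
Your proposal is correct and follows essentially the same route as the paper: both split the difference $g(x)h(x)-g(y)h(y)$ into the two standard pieces, apply H\"older in the double integral with exponents $k_i$ and $k_i/(k_i-1)$, and identify the resulting kernel exponent $(2+\beta p)\tfrac{k_i}{k_i-1}$ as $2 + q\sigma$ with $q=\tfrac{pk_i}{k_i-1}$ and $\sigma=\beta+\tfrac{2}{pk_i}$, noting that $\sigma<1$ is exactly the stated constraint on $k_i$. The only cosmetic differences are your choice of which variable carries $h$ (immaterial by symmetry of the Gagliardo kernel) and your explicit treatment of the $k_i=\infty$ endpoint, which the paper leaves implicit.
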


\begin{proof}
    Thanks to \eqref{def:frac-sob}, we can write
    \begin{equation*}
        \|gh\|_{W^{\beta,p}}^p = \|gh\|_{L^p}^p + \int_\Omega\int_\Omega \frac{|g(x)h(x)-g(y)h(y)|^p}{|x-y|^{2+\beta p}} dxdy.
    \end{equation*}
    Using the inequality $|a+b|^p \leq C(|a|^p + |b|^p)$ that holds for all $p\geq 1$, we have
    \begin{align*}
        |g(x)h(x)-g(y)h(y)|^p = &|g(x)(h(x)-h(y)) + (g(x)-g(y))h(y)|^p 
        \\
        \leq & C \left(|g(x)|^p |h(x)-h(y)|^p + |g(x)-g(y)|^p |h(y)|^p \right).
    \end{align*}
    We first consider the term $|g(x)-g(y)|^p |h(y)|^p$ in the double integral.
    For $k_1>1$, we use H\"older's inequality with exponents $k_1$ and  $k_1^*=\frac{k_1}{k_1-1}$ and bound 
    \begin{align*}
         &\int_\Omega\int_\Omega \frac{|g(x)-g(y)|^p}{|x-y|^{2+\beta p}} |h(y)|^p dydx 
         \\
         \leq & \left( \int_\Omega\int_\Omega \frac{|g(x)-g(y)|^{pk_1^*}}{|x-y|^{(2+\beta p)k_1^*}} dydx \right)^{\frac1{k_1^*}} \left( \int_\Omega\int_\Omega |h(y)|^{pk_1} dydx\right)^{\frac1{k_1}}
         \\
         = & \left( \int_\Omega\int_\Omega \frac{|g(x)-g(y)|^{pk_1^*}}{|x-y|^{2+pk_1^*(\frac{2}{pk_1}+\beta)}} dydx \right)^{\frac1{k_1^*}} \left( \int_\Omega\int_\Omega |h(y)|^{pk_1} dydx\right)^{\frac1{k_1}}
         \\
         \leq & C \|g\|_{W^{\frac2{pk_1}+\beta,\frac{pk_1}{k_1-1}}}^p \|h\|_{L^{pk_1}}^p,
    \end{align*}
    provided that $\frac2{pk_1}+\beta <1 \Leftrightarrow k_1 > \frac{2}{p(1-\beta)}$.  The second term $|g(x)|^p |h(x)-h(y)|^p$ can be estimated similarly with some $k_2$ satisfying the same condition. The lower order term $\|gh\|_{L^p}$ can be bounded from above by the right-hand side of \eqref{product:1} using the standard H\"older inequality.
\end{proof}

In view of the Sobolev inequality, we deduce the following estimate:

\begin{cor}
    Let $\beta \in (0,1)$ and $p\in[1,\infty)$. For smooth functions $g$ and $h$ defined on $\Omega$, it holds that
    \begin{equation}
        \|gh\|_{W^{\beta,p}} \leq C \left(\|g\|_{H^{1+\beta-\frac2p+\frac4{pk_1}}} \|h\|_{H^{1-\frac2{pk_1}}} + \|g\|_{H^{1-\frac2{pk_2}}}\|h\|_{H^{1+\beta-\frac2p+\frac4{pk_2}}} \right)
    \end{equation}
    for all $k_1, k_2 \in (\frac{2}{p(1-\beta)},\infty)$.
\end{cor}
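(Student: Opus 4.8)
The plan is to read the corollary off from the bilinear estimate \eqref{product:1} of Proposition \ref{prop:product-1} by replacing, on its right-hand side, each Lebesgue and fractional Sobolev norm by an $H^{\gamma}(\Omega)$ norm through the two-dimensional Sobolev embedding. Only two shapes of factors occur: Lebesgue factors $\|h\|_{L^{pk_1}}$ and $\|g\|_{L^{pk_2}}$, and fractional factors $\|g\|_{W^{\frac{2}{pk_1}+\beta,\,pk_1/(k_1-1)}}$ and $\|h\|_{W^{\frac{2}{pk_2}+\beta,\,pk_2/(k_2-1)}}$. Treating the two indices $k_i$ and the roles of $g$ and $h$ symmetrically, it suffices to estimate one factor of each type.

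For the Lebesgue factor I would invoke $H^{\gamma}(\Omega)\hookrightarrow L^{q}(\Omega)$ in dimension two, valid for $\gamma\in(0,1)$ at the critical relation $\frac{1}{q}=\frac12-\frac{\gamma}{2}$, i.e. $\gamma=1-\frac{2}{q}$. Choosing $q=pk_1$ gives $\gamma=1-\frac{2}{pk_1}$, hence $\|h\|_{L^{pk_1}}\le C\|h\|_{H^{1-2/(pk_1)}}$; the exponent lies in $(0,1)$ since $k_1>\frac{2}{p(1-\beta)}>\frac{2}{p}$ forces $pk_1>2$, and the same computation gives $\|g\|_{L^{pk_2}}\le C\|g\|_{H^{1-2/(pk_2)}}$.

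For the fractional factor I would use the two-dimensional embedding $H^{s}(\Omega)\hookrightarrow W^{\sigma,r}(\Omega)$ valid along the scaling line $s-1=\sigma-\frac{2}{r}$. With $\sigma=\frac{2}{pk_1}+\beta$ and $r=\frac{pk_1}{k_1-1}$ one computes $\frac{2}{r}=\frac{2}{p}-\frac{2}{pk_1}$, so $s=1+\sigma-\frac{2}{r}=1+\beta-\frac{2}{p}+\frac{4}{pk_1}$, which is precisely the exponent in the statement; thus $\|g\|_{W^{\frac{2}{pk_1}+\beta,\,pk_1/(k_1-1)}}\le C\|g\|_{H^{1+\beta-2/p+4/(pk_1)}}$, and symmetrically with $k_2$. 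Plugging these four bounds into \eqref{product:1} yields the asserted inequality.

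The one point requiring care is the validity of $H^{s}(\Omega)\hookrightarrow W^{\sigma,r}(\Omega)$ on the bounded domain $\Omega$: since $H^{s}=W^{s,2}$, a genuine Sobolev gain along the scaling line requires the target exponent $r=\frac{pk_1}{k_1-1}$ to satisfy $r\ge 2$, equivalently $\sigma\le s$. A short computation shows $s-\sigma=1-\frac{2(k_1-1)}{pk_1}$, which is nonnegative for every admissible $k_1$ as soon as $p\ge 2$ — in particular for the case $p=2$ relevant to controlling $\l^{\beta}(gh)$ in $L^{2}$ — while for $p\in[1,2)$ one either restricts $k_1$ slightly further or bounds the Gagliardo seminorm of $W^{\sigma,r}$ directly via Hölder's inequality exactly as in the proof of Proposition \ref{prop:product-1}. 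Everything else is direct substitution and bookkeeping of the exponents.
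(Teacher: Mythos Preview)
Your approach is exactly the one the paper intends: the corollary is deduced from Proposition~\ref{prop:product-1} by applying the two-dimensional Sobolev embeddings $H^{1-2/(pk_i)}\hookrightarrow L^{pk_i}$ and $H^{1+\beta-2/p+4/(pk_i)}\hookrightarrow W^{2/(pk_i)+\beta,\,pk_i/(k_i-1)}$ to each factor on the right of~\eqref{product:1}, and your computation of the exponents is correct. Your caveat about the embedding $H^s\hookrightarrow W^{\sigma,r}$ requiring $r\ge 2$ is a fair point the paper does not address; since the corollary is only used downstream with $p=2$, this does not affect the applications.
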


Using the representation formula \eqref{prodfor1} and Proposition \ref{prop:product-1}, we prove the following product estimates for fractional powers of the Laplacian:

\beg{thm}\label{thm:product-1}
Let $g$ and $h$ be some functions defined on $\Omega$.   
\begin{enumerate}
    \item Let $\beta\in(0,1)$ such that $\beta \neq 1/2$. Let $g\in W^{\frac1{k_1}+\beta, \frac{2k_1}{k_1-1}} \cap L^{2k_2}$ and $h\in  W^{\frac1{k_2}+\beta, \frac{2k_2}{k_2-1}} \cap L^{2k_1}$ for some $k_1, k_2 \in (\frac{1}{1-\beta},\infty]$. Assume, in addition, hat $gh$ vanishes on the boundary when $\beta\in (\frac12,1)$. Then it holds that
    \begin{equation}\label{prod:1}
        \|\l^{\beta} (gh)\|_{L^2} \leq C\left(\|g\|_{W^{\frac1{k_1}+\beta,\frac{2k_1}{k_1-1}}} \|h\|_{L^{2k_1}} + \|g\|_{L^{2k_2}}\|h\|_{W^{\frac1{k_2}+\beta,\frac{2k_2}{k_2-1}}}  \right).
    \end{equation}
    
    \item Let $\beta\in(0,1)$. Let $g \in H^{\beta}(\Omega) \cap L^{\infty}(\Omega)$ and $h \in \mathcal{D}(\l^{\beta}) \cap L^{\infty}(\Omega)$. Then it holds that 
\be \label{fractionalproduct}
\|\l^{\beta} (gh)\|_{L^2} 
\le C\|g\|_{L^{\infty}} \|\l^{\beta} h\|_{L^2} 
+ C\|h\|_{L^{\infty}} \|g\|_{H^{\beta}}.
\ee  

\item Let $\beta\in (0,1)$ and $\gamma \in [0,1]$ such that $\gamma> \beta$. Let $g \in C^{0,\gamma}$ and $h\in \mathcal D(\l^\beta)$. Then it holds that 
\be \label{fractionalproductt}
\|\l^{\beta} (gh)\|_{L^2} 
\le C\|g\|_{L^{\infty}} \|\l^{\beta} h\|_{L^2} 
+ C\|h\|_{L^{2}} [g]_{C^{0,\gamma}} .
\ee  

\end{enumerate}
\end{thm}

\begin{proof} The proof of (1) follows from the identifications $\mathcal D(\l^\beta)=H^\beta$ when $\beta\in(0,\frac12)$ and $\mathcal D(\l^\beta)=H_0^\beta$ when $\beta\in(\frac12,1)$, and Proposition~\ref{prop:product-1}.

For (2), in view of the representation formula \eqref{prodfor1}, we have 
    \be 
\|\l^{\beta} (gh)\|_{L^2}^2
= \int_{\Omega} \int_{\Omega} \left(g(x)h(x) - g(y)h(y)\right)^2 K_{\beta} (x,y) dxdy
+ \int_{\Omega} g(x)^2 h(x)^2 B_\beta(x) dx,
    \ee which amounts to 
    \be 
    \beg{aligned}
\|\l^{\beta} (gh)\|_{L^2}^2
&= \int_{\Omega} \int_{\Omega} \left(g(x)h(x) - g(y) h(x) + g(y) h(x) - g(y)h(y)\right)^2 K_{\beta} (x,y) dxdy
\\&\quad\quad+ \int_{\Omega} g(x)^2 h(x)^2 B_\beta(x) dx
\end{aligned}
    \ee after adding and subtracting $g(y)h(x)$ in the first double integral. By making use of the algebraic inequality $(a+b)^2 \le 2a^2 + 2b^2$ that holds for all $a, b \in \R$, we estimate
    \be 
    \beg{aligned}
\|\l^{\beta} (gh)\|_{L^2}^2
&\le 2 \int_{\Omega} \int_{\Omega} (g(x) - g(y))^2 h(x)^2 K_{\beta}(x,y) dxdy
\\&\quad\quad+ 2\int_{\Omega} \int_{\Omega} (h(x) - h(y))^2 g(y)^2 K_{\beta}(x,y) dxdy 
\\&\quad\quad\quad\quad+ \int_{\Omega} g(x)^2 h(x)^2 B_\beta(x) dx.
\end{aligned}
    \ee In view of the kernel estimate \eqref{prodfor2}, we obtain 
    \be 
    \beg{aligned}
\|\l^{\beta}(gh)\|_{L^2}^2 
&\le C\|h\|_{L^{\infty}}^2 \int_{\Omega}\int_{\Omega} \frac{(g(x) - g(y))^2}{|x-y|^{2+2\beta}} dxdy \\&\quad\quad+ 2\|g\|_{L^{\infty}}^2 \left\{\int_{\Omega}\int_{\Omega} (h(x) - h(y))^2 K_{\beta}(x,y)dxdy + \int_{\Omega} h(x)^2B_\beta(x) dx \right\},
    \end{aligned}
    \ee yielding the desired bound \eqref{fractionalproduct}. 
    
 As for (3), we make use of the commutator estimate 
\be 
\|\l^{\beta} (gh) - g \l^{\beta} h\|_{L^2} \le C[g]_{C^{0,\gamma}} \|h\|_{L^2}
\ee (see Theorem 2.6 in \cite{nguyen2018global}) and estimate the product
\be 
\|\l^{\beta} (gh)\|_{L^2}
\le \|\l^{\beta} (gh) - g \l^{\beta} h\|_{L^2} + \|g \l^{\beta} h\|_{L^2}
\le C[g]_{C^{0,\gamma}} \|h\|_{L^2} + C\|g\|_{L^{\infty}} \|\l^{\beta} h\|_{L^2}.
\ee 

\end{proof}

Next, we prove the following fractional trilinear estimate:

\beg{thm} \la{thm:nonlinear} Let $\alpha \in (1,2)$. Let $\epsilon_0$ be a sufficiently small quantity such that $\alpha > 1+ 2\epsilon_0$.  Suppose $v$ and $w$ are smooth functions that vanish on the boundary $\partial \Omega$, and $u$ is a smooth vector field.
Then it holds that 
\be 
|(\l^{1-\frac{\alpha}{2}} (u \cdot \na v), \l^{1 + \frac{\alpha}{2}}w)_{L^2(\Omega)}|
\le C\|\l^{1+\frac{\alpha}{2}} w\|_{L^2(\Omega)} \|u\|_{H^1(\Omega)} \|\l^{2 - \frac{\alpha}{2} + \epsilon_0} v\|_{L^2(\Omega)}.
\ee 
\end{thm}

Theorem~\ref{thm:nonlinear} follows from the following lemma.

\beg{lem}\label{lem-1}
Let $\alpha \in (1,2)$. Let $\epsilon_0$ be a sufficiently small quantity such that $\alpha > 1+ 2\epsilon_0$.  Suppose $v \in \mathcal{D}(\l^{2-\frac{\alpha}{2} + \epsilon_0})$, $w \in \mathcal{D}(\l^{1 + \frac{\alpha}{2}})$, and $u \in H^1(\Omega)$.
For any $\epsilon>0$, it holds that 
\be \la{innerproductb}
|(u \cdot \na v, -\Delta J_{\epsilon} w)_{L^2(\Omega)}|
\le C\|\l^{1+\frac{\alpha}{2}} w\|_{L^2(\Omega)} \|u\|_{H^1(\Omega)} \|\l^{2 - \frac{\alpha}{2} + \epsilon_0} v\|_{L^2(\Omega)}.
\ee 
\end{lem}

\begin{proof} We drop the regularizer $J_{\epsilon}$ throughout the proof for simplicity. First note that since $2-\frac\alpha2+\epsilon_0>1$ and  $v \in \mathcal{D}(\l^{2-\frac{\alpha}{2} + \epsilon_0})$, it follows that $v$ has a vanishing trace on $\pa \Omega$.
 Let $v'(x)$ be the vector field defined by 
\be 
v'(x) =\begin{cases}
\nabla v(x), \hspace{3cm} x \in \Omega,
\\0 \hspace{3.5cm} x \in \RR^2 \setminus \Omega.
\end{cases}
\ee Since $\alpha/2 < 1$ and $\na w \in H^{\frac{\alpha}{2}}(\Omega)$, the result of \cite{zhou2015fractional} implies the existence of an extension $w'$ of $\na w$ obeying $w'(x) = \na w(x)$ in $\Omega$ and $\|w'\|_{H^{\fr{\alpha}{2}}(\RR^2)} \le C\|\na w\|_{H^{\fr{\alpha}{2}}(\Omega)}$ where $C$ is a universal positive constant.
Also, as $u \in H^1(\Omega)$, there is an extension $\tilde{u}$ of $u$ such that $\tilde{u}(x) = u (x)$ in $\Omega$ and $\|\tilde{u}\|_{H^1(\RR^2)} \le C\|{u}\|_{H^1(\Omega)}$.
Consequently, we have 
\be 
\beg{aligned}
(u \cdot \na v, -\Delta w)_{L^2(\Omega)}
= (\tilde{u} \cdot v', - \na \cdot  w')_{L^2(\RR^2)} = (\na (\tilde u \cdot v'), w')_{L^2(\RR^2)}.
\end{aligned}
\ee Denoting by $\Lambda_{\RR^2}$ the fractional Laplacian of order 1 on the whole space $\RR^2$ (which is defined as a Fourier multiplier) and using the fact that $\Lambda_{\RR^2}^{-1}$ and $\na$ commute on $\RR^2$, we can rewrite this latter inner product as 
\be 
(u \cdot \na v, -\Delta w)_{L^2(\Omega)}
= (\Lambda_{\RR^2} \na \Lambda_{\RR^2}^{-1} (\tilde{u} \cdot v'), w')_{L^2(\RR^2)} 
= (\Lambda_{\RR^2}^{1-\fr{\alpha}{2}} \na \Lambda_{\RR^2}^{-1} (\tilde{u} \cdot v'), \Lambda_{\RR^2}^{\fr{\alpha}{2}} w')_{L^2(\RR^2)}.
\ee A straightforward application of the Cauchy-Schwarz inequality gives rise to 
\be 
|(u \cdot \na v, -\Delta w)_{L^2(\Omega)}|
\le \|\Lambda_{\RR^2}^{1-\fr{\alpha}{2}} \na \Lambda_{\RR^2}^{-1} (\tilde{u} \cdot v')\|_{L^2(\RR^2)} \| \Lambda_{\RR^2}^{\fr{\alpha}{2}} w'\|_{L^2(\RR^2)}.
\ee 
We estimate
\be 
\|\Lambda_{\RR^2}^{\fr{\alpha}{2}} w'\|_{L^2(\RR^2)} \le C\|w'\|_{H^{\fr{\alpha}{2}}(\RR^2)}
\le C\|\na w\|_{H^{\fr{\alpha}{2}}(\Omega)}
\le C\|w\|_{H^{1+\frac{\alpha}{2}}(\Omega)} \le C\|\l^{1+\frac{\alpha}{2}} w\|_{L^2(\Omega)}
\ee where the last inequality follows from the fact that $w \in \mathcal{D}(\l^{1+\frac{\alpha}{2}})$ and the continuous embedding of $\mathcal{D}(\l^{1+\frac{\alpha}{2}})$ into $H^{1+\frac{\alpha}{2}}(\Omega)$.
Using the fact that $\na \Lambda_{\RR^2}^{-1}$ and $\Lambda_{\RR^2}^{1-\frac{\alpha}{2}}$ commute and the boundedness of the Riesz transform $\na \Lambda_{\RR^2}^{-1}$ on $L^2(\RR^2)$, we have 
\be 
\|\Lambda_{\RR^2}^{1-\fr{\alpha}{2}} \na \Lambda_{\RR^2}^{-1} (\tilde{u} 
 \cdot v')\|_{L^2(\RR^2)}
\le C\| \Lambda_{\RR^2}^{1-\fr{\alpha}{2}} (\tilde{u} \cdot v')\|_{L^2(\RR^2)}.
\ee By making use of the fractional product estimates in $\mathbb R^2$, we bound
\be 
\beg{aligned}
\|\Lambda_{\RR^2}^{1-\fr{\alpha}{2}} \na \Lambda_{\RR^2}^{-1} (\tilde{u} 
 \cdot v')\|_{L^2(\RR^2)}
 &\le C\|\tilde{u} \|_{W^{1-\frac{\alpha}{2}, \frac{4}{2-\alpha}}(\RR^2)} \|v' \|_{L^{\frac{4}{\alpha}}(\RR^2)} + C\|\tilde{u}\|_{L^{r}(\RR^2)} \|v'\|_{W^{1-\frac{\alpha}{2},2+\kappa}(\RR^2)}
 \end{aligned}
\ee where $\kappa$ is chosen so that is $H^{1-\frac{\alpha}{2} + \epsilon_0}$ continuously embedded in $W^{1-\frac{\alpha}{2}, 2+ \kappa}$ and $r$ is the H\"older exponent obeying $\frac{1}{r} + \frac{1}{2+\kappa} = \frac{1}{2}$. 
Using the continuous Sobolev embedding of $H^1(\RR^2)$ into $W^{1-\frac{\alpha}{2}, \frac{4}{2-\alpha}}$, we have 
\be 
\|\tilde{u} \|_{W^{1-\frac{\alpha}{2}, \frac{4}{2-\alpha}}(\RR^2)}
\le C\|\tilde{u}\|_{H^1(\RR^2)}
\le C\|u\|_{H^1(\Omega)}.
\ee As $v'(x)=\nabla v(x)$ in $\Omega$, and since $H^{2-\frac{\alpha}{2}}(\Omega)$ is continuously embedded in $W^{1, \frac{4}{\alpha}}(\Omega)$, it holds that 
\be 
\|v'\|_{L^{\frac{4}{\alpha}}(\RR^2)}
= \|\na v\|_{L^{\frac{4}{\alpha}}(\Omega)}
\le C\|v\|_{H^{2-\frac{\alpha}{2}}(\Omega)}
\le C\|\l^{2-\frac{\alpha}{2}} v\|_{L^2(\Omega)}.
\ee In view of the continuous embedding of $H^1(\RR^2)$ into $L^r(\RR^2)$, we estimate 
\be 
\|\tilde{u}\|_{L^r(\RR^2)} \le C\|\tilde{u}\|_{H^1(\RR^2)} \le C\|u\|_{H^1(\Omega)}.
\ee As $0 \le 1- \frac{\alpha}{2} + \epsilon_0 < \frac{1}{2}$, we infer that $\na v \in \mathcal{D}(\l^{1- \frac{\alpha}{2} + \epsilon_0})$ and we apply Proposition 2.3 in \cite{stokols2020holder} to bound
\be 
\|v'\|_{W^{1-\frac{\alpha}{2}, 2+\kappa}(\RR^2)} \le C\|v'\|_{H^{1-\frac{\alpha}{2} + \epsilon_0}(\RR^2)} 
\le C\|\l^{1-\frac{\alpha}{2} + \epsilon_0} \na {v}\|_{L^2(\Omega)}
\le C\|\l^{2-\frac{\alpha}{2} + \epsilon_0} {v}\|_{L^2(\Omega)}.
\ee Putting all these estimates together, we obtain \eqref{innerproductb}. 
\end{proof}

\section{A Poincar\'e Inequality for the Fractional Laplacian} \la{sec5}

We recall the pointwise C\'ordoba-C\'ordoba inequality on bounded smooth domains: 

\beg{prop}[\cite{constantin2016critical}] \la{corcor} Let $0 \le s < 2$. There exists a constant $c > 0$ depending only on the domain  $\Omega$ and on $s$, such that for any $C^1$ convex function $\Phi$ satisfying $\Phi(0) = 0$, and any function $f \in \mathcal{D}(\l^s)$ with $\Phi(f) \in \mathcal{D}(\l^s)$, the inequality 
\be 
\Phi'(f) \l^s f - \l^s (\Phi(f)) \ge \fr{c}{d(x)^s} (f \Phi'(f) - \Phi(f))
\ee holds pointwise in $\Omega$.
\end{prop}

\begin{rem}
    In the original statement \cite{constantin2016critical} it is required that $\Phi \in C^2$. Indeed, following the proof in \cite{constantin2017remarks}, one only needs $\Phi$ to be $C^1$ and convex.
\end{rem}

We use this pointwise inequality to prove a Poincar\'e inequality for the fractional Laplacian in $L^p(\Omega)$. We need first the following two elementary lemmas:

\beg{prop} \la{prop:nonnegativity}
Let $s \in (0,2)$. Let $f$ be a nonnegative function such that $f \in \mathcal{D}(\l^s)$. Then it holds that 
\be 
\int_{\Omega} \l^s f(x) dx \ge 0.
\ee 
\end{prop}

\beg{proof} For $\eta \in (0,1)$, recall the truncated fractional Laplacian defined in \eqref{eqn:trun-Lap}:
\be 
(\l^s)_{\eta} f(x) = c_s \int_{\eta}^{\infty}  [f(x) - e^{t\Delta_D} f(x)] t^{-1-\frac{s}{2}} dt.
\ee We have
\be 
\int_{\Omega} (\l^s)_{\eta} f(x) dx 
= c_s \int_{\Omega} \int_{\eta}^{\infty}\left[ f(x) - \int_{\Omega} H_D(x,y,t) f(y) dy \right] t^{-1-\frac{s}{2}} dt dx.
\ee Now we interchange the order of integration, which is allowed by Fubini's theorem as the integrand does not have any singularities. We obtain 
\be 
\int_{\Omega} (\l^s)_{\eta} f(x) dx 
= c_s \int_{\eta}^{\infty} \left[\int_{\Omega} f(x) dx - \int_{\Omega} \left(\int_{\Omega} H_D(x,y,t) dx \right) f(y) dy \right] t^{-1- \frac{s}{2}}dt.
\ee By the symmetry of the heat kernel and the maximum principle, it holds that 
\be 
0 \le \int_{\Omega} H_D(x,y,t) dx = \int_{\Omega} H_D(y,x,t) dx \le 1
\ee for all $y \in \Omega$ and $t \ge 0$. Due to the nonnegativity of $f$, it follows that 
\be 
0 \le \int_{\Omega} \left(\int_{\Omega} H_D(x,y,t) dx \right) f(y) dy \le \int_{\Omega} f(y) dy, 
\ee and consequently,
\be 
\int_{\Omega} (\l^s)_{\eta} f(x) dx 
\ge c_s \int_{\eta}^{\infty} \left[\int_{\Omega} f(x) dx - \int_{\Omega} f(y) dy \right] t^{-1-\frac{s}{2}} dt \ge 0.
\ee Finally, $(\l^s)_{\eta} f$ converges strongly in $L^2$ to $\l^s f$, so 
\be 
\left|\int_{\Omega} (\l^s)_{\eta} f(x) dx - \int_{\Omega} \l^s f(x) dx \right|
\le |\Omega|^{\frac{1}{2}} \|(\l^s)_{\eta} f - \l^s f\|_{L^2} \rightarrow 0.
\ee Therefore, we conclude that 
\be 
\int_{\Omega} \l^s f(x) dx \ge 0. 
\ee 
\end{proof}

\begin{prop} \label{prop:regularity}
    Let $s\in (0,2]$ and $\beta>0$. Consider a function $q\in \mathcal D(\l^s)$. Then $|q|^\beta\in \mathcal D(\l^s)$ holds if (i)  $q$ is Lipschitz  continuous, and $0<\beta<1$ and $0<s<\beta$; (ii) $\beta=1$ and $s=1$; (iii) $q \in L^\infty(\Omega)$, and $1<\beta<2$ and $s=1$; (iv) $\beta\geq 2$ and $s= 2$.
\end{prop}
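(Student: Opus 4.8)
The plan is to split the argument into the four listed regimes and exploit the identifications of $\mathcal D(\l^s)$ recorded in Section~\ref{sec2}.

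For (ii)--(iv), where $\beta\ge 1$ and $s\in\{1,2\}$, I would argue by composition. Write $\phi(t)=|t|^{\beta}$. In (ii), $\phi$ is globally Lipschitz with $\phi(0)=0$, so Stampacchia's chain rule gives $|q|=\phi(q)\in H_0^1(\Omega)=\mathcal D(\l)$ whenever $q\in H_0^1(\Omega)$. In (iii), $1<\beta<2$ and $s=1$: here $\phi\in C^1(\mathbb R)$ (since $\beta>1$, so $\phi'(0)=0$), and because $q\in L^\infty(\Omega)$ I would replace $\phi$ outside $[-\|q\|_{L^\infty},\|q\|_{L^\infty}]$ by a globally Lipschitz $C^1$ function $\tilde\phi$ with $\tilde\phi(0)=0$ that agrees with $\phi$ on the range of $q$; then $|q|^{\beta}=\tilde\phi(q)\in H_0^1(\Omega)=\mathcal D(\l)$, again by the Lipschitz composition argument. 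In (iv), $\beta\ge2$ and $s=2$: now $q\in\mathcal D(\l^2)=H^2(\Omega)\cap H_0^1(\Omega)$, which in two dimensions embeds into $L^\infty(\Omega)$ and has $\nabla q\in H^1(\Omega)\hookrightarrow L^4(\Omega)$. Since $|t|^{\beta}\in C^2(\mathbb R)$ for $\beta\ge2$, I would take a globally $C^2$ modification $\tilde\phi$ with bounded $\tilde\phi',\tilde\phi''$ and $\tilde\phi(0)=0$, and verify, by mollifying $q$ and passing to the limit, the identities $\nabla(\tilde\phi(q))=\tilde\phi'(q)\nabla q$ and $\nabla^2(\tilde\phi(q))=\tilde\phi''(q)\,\nabla q\otimes\nabla q+\tilde\phi'(q)\nabla^2 q$; the right-hand sides lie in $L^2(\Omega)$ precisely because $\nabla q\in L^4(\Omega)$, so that $\nabla q\otimes\nabla q\in L^2(\Omega)$. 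Hence $\tilde\phi(q)\in H^2(\Omega)$, and together with $\tilde\phi(q)\in H_0^1(\Omega)$ this gives $|q|^{\beta}=\tilde\phi(q)\in H^2(\Omega)\cap H_0^1(\Omega)=\mathcal D(\l^2)$.

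For (i), with $q$ Lipschitz of constant $L$ and $0<s<\beta<1$, I would estimate the Gagliardo seminorm of $|q|^{\beta}$ directly. The elementary inequality $\bigl||a|^{\beta}-|b|^{\beta}\bigr|\le|a-b|^{\beta}$, valid for $0<\beta\le1$, together with the Lipschitz bound gives $\bigl(|q(x)|^{\beta}-|q(y)|^{\beta}\bigr)^{2}\le\min\{L^{2\beta}|x-y|^{2\beta},4\|q\|_{L^\infty}^{2\beta}\}$, so splitting the integration region according to whether $|x-y|<1$ or $|x-y|\ge1$ yields
\be
\int_{\Omega}\int_{\Omega}\frac{\bigl(|q(x)|^{\beta}-|q(y)|^{\beta}\bigr)^{2}}{|x-y|^{2+2s}}\,dx\,dy\le CL^{2\beta}\int_{|z|<1}|z|^{2\beta-2-2s}\,dz+C\|q\|_{L^{\infty}}^{2\beta}|\Omega|^{2},
\ee
and the first integral converges precisely because $s<\beta$. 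Hence $|q|^{\beta}\in H^{s}(\Omega)$. If $s<1/2$ this already gives $|q|^{\beta}\in H^{s}(\Omega)=\mathcal D(\l^s)$. If $s\in(1/2,1)$, then $q\in\mathcal D(\l^s)=H_0^s(\Omega)$ has zero trace, so (being continuous) $q|_{\partial\Omega}=0$, whence $|q|^{\beta}$ is continuous and vanishes on $\partial\Omega$; with $|q|^{\beta}\in H^s(\Omega)$ this gives $|q|^{\beta}\in H_0^s(\Omega)=\mathcal D(\l^s)$. If $s=1/2$, I would first note that $q\in H_{00}^{1/2}(\Omega)$ together with continuity forces $q|_{\partial\Omega}=0$, for otherwise $|q|^2/d$ would fail to be integrable, $1/d$ being non-integrable in a collar neighborhood of $\partial\Omega$; then, taking for $x$ near $\partial\Omega$ a nearest boundary point $\bar x$, the Lipschitz bound gives $|q(x)|=|q(x)-q(\bar x)|\le L\,d(x)$, so $|q|^{\beta}/\sqrt d\le L^{\beta}d^{\beta-1/2}\in L^\infty(\Omega)$ since $\beta>1/2$; with $|q|^{\beta}\in H^{1/2}(\Omega)$ this gives $|q|^{\beta}\in H_{00}^{1/2}(\Omega)=\mathcal D(\l^{1/2})$.

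The main obstacle is the boundary/endpoint analysis in (i) when $s\ge1/2$: one must extract the pointwise vanishing of $q$ on $\partial\Omega$ from membership in $H_0^s(\Omega)$ or $H_{00}^{1/2}(\Omega)$, and then promote the Lipschitz bound to the pointwise decay $|q(x)|\lesssim d(x)$ near $\partial\Omega$ that is needed both for the zero-trace requirement and for the weighted condition defining $H_{00}^{1/2}(\Omega)$. A secondary technical point is making the $H^2$ chain rule in (iv) rigorous, where square integrability of the Hessian of $|q|^{\beta}$ rests on the two-dimensional embedding $H^2(\Omega)\hookrightarrow W^{1,4}(\Omega)$.
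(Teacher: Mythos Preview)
Your proof is correct and follows essentially the same approach as the paper: a direct Gagliardo seminorm estimate for case~(i) and chain-rule computations for cases~(ii)--(iv). Your treatment is in fact slightly more thorough, since you handle the endpoint $s=1/2$ in case~(i) via the weighted characterization of $H_{00}^{1/2}(\Omega)$ (which the paper's proof does not address explicitly) and you justify the $H^2$ chain rule in~(iv) by mollification and the embedding $H^2(\Omega)\hookrightarrow W^{1,4}(\Omega)$, whereas the paper simply writes down $\Delta(|q|^{\beta})$ directly.
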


\begin{proof}
    We start with case (i). When $0<\beta\leq \frac12$, it follows that $s\in(0,\frac12)$, thus the domain $\mathcal D(\l^s)$ is identified with $H^s$, and we do not need to worry about the vanishing on the boundary. For $\beta\in(\frac12,1)$ and $s\in(\frac12,\beta)$,  the domain $\mathcal D(\l^s)$ is identified with $H_0^s$. In this situation, $q\in \mathcal D(\l^s)$, so $q$ vanishes on the boundary, and so does $|q|^{\beta}$.  Therefore, we only need to check whether $|q|^\beta \in H^s$. 

    By making use of the $\beta$-H\"older continuity of the function $f(x) = |x|^{\beta}$, the reverse triangle inequality, and the Lipschitz continuity of $q$, we have
    \begin{equation}
        \Big| |q(x)|^\beta - |q(y)|^\beta \Big| \leq C \Big| |q(x)| - |q(y)| \Big|^\beta \leq C\Big| q(x) - q(y)\Big|^\beta \leq C|x-y|^\beta. 
    \end{equation}
    Thus, we obtain
    \begin{equation}
        \frac{\Big| |q(x)|^\beta - |q(y)|^\beta \Big|^2}{|x-y|^{2+2s}} \leq C |x-y|^{-2-2s+2\beta}.
    \end{equation}
   Since $s<\beta$, it follows that $|x-y|^{-2-2s+2\beta} \in L^1(\Omega\times\Omega)$, and therefore $|q|^\beta \in W^{s,2}=H^s$.

    Case (ii) follows directly from the fact that $|q|\in \mathcal D(\l) = H_0^1$ when $q\in D(\l) = H_0^1$.

    For case (iii), note that $|\nabla(|q|^\beta)|^2 = \beta^2 |q|^{2\beta-2} |\nabla q|^2$, which is integrable on $\Omega$ since $q \in D(\l) \cap L^\infty$.

    For case (iv), one can compute
    \[
     \Delta (|q|^\beta) = (\beta-1)|q|^{\beta-2}|\nabla q|^2 + |q|^{\beta-1} \frac{q}{|q|} \Delta q.
    \]
    Since $q \in \mathcal{D}(\l^2) = H^2 \cap H_0^1$, it follows that $q \in L^{\infty}$ in view of the continuous Sobolev embedding of $H^2$ in $L^{\infty}$. As $\beta \ge 2$, we infer that $\Delta (|q|^{\beta}) \in L^2$. Since $|q|^{\beta}$ vanishes on the boundary, we deduce furthermore that $|q|^\beta \in \mathcal D(\l^2) = H^2 \cap H_0^1$.
\end{proof}

\begin{thm} \la{thm:poin}
Let $p \geq 2$, $0 < s < 2$, and $q$ be a function smooth up to the boundary such that $q\in \mathcal D(\l^{s})$.  
Then 
\be 
\int_{\Omega} q(x)|q(x)|^{p-2} \l^{s} q(x) dx 
\ge c_1 \|\l^{\fr{s}{2}}(|q|^{p/2})\|_{L^2}^2 + c_2\|q\|_{L^p}^p
\label{Poinc} 
\ee holds, where 
\begin{equation*}
    (c_1, c_2)=
    \begin{cases}
      (\frac1p, \frac1p {\lambda_1^{\frac{s}2}}), & \text{if}\ p=2, 4, \\
      (0, C_{\Omega,s} (1-\frac2p)), & \text{if $2 < p < 4$ and $s > 1$, or $2<p<3$ and $p-2\leq s \leq 1$,}\\ 
      (2-\frac4p, C_{\Omega,s} (\frac4p-1)), &\text{if $2<p<3$ and $0<s<p-2$, or $3 \leq p < 4$ and $s \le 1$,}\\ 
      (\frac4p, C_{\Omega,s}(1-\frac4p)), &\text{if $p > 4$.}
    \end{cases}
  \end{equation*}

\end{thm}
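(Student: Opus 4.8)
The plan is to start from the pointwise C\'ordoba--C\'ordoba inequality (Proposition~\ref{corcor}) applied with a carefully chosen convex function $\Phi$, integrate over $\Omega$, and control the resulting terms using Proposition~\ref{prop:nonnegativity} and the bound $\|\l^{s/2}(|q|^{p/2})\|_{L^2}^2 \ge \lambda_1^{s/2}\||q|^{p/2}\|_{L^2}^2 = \lambda_1^{s/2}\|q\|_{L^p}^p$. The natural choice is $\Phi(f)=\frac1p|f|^p$ (suitably interpreted so that it is $C^1$ and convex for $p\ge 2$), so that $\Phi'(f)=f|f|^{p-2}$, $f\Phi'(f)=|f|^p$, and $f\Phi'(f)-\Phi(f)=(1-\frac1p)|f|^p\ge 0$. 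Integrating the pointwise inequality then yields
\[
\int_\Omega q|q|^{p-2}\l^s q\,dx \;\ge\; \int_\Omega \l^s\!\big(\tfrac1p|q|^p\big)\,dx \;+\; c\Big(1-\tfrac1p\Big)\int_\Omega \frac{|q|^p}{d(x)^s}\,dx.
\]
Since $q$ is smooth up to the boundary and $q\in\mathcal D(\l^s)$, one must first verify that $|q|^p=\Phi(q)$ lies in $\mathcal D(\l^s)$ so that Proposition~\ref{corcor} applies and the integral $\int_\Omega\l^s(\frac1p|q|^p)\,dx$ makes sense; this is exactly what Proposition~\ref{prop:regularity} is for, and the case division in the theorem statement mirrors the hypotheses there (taking $\beta=p/2$ or an intermediate exponent). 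The term $\int_\Omega\l^s(\frac1p|q|^p)\,dx$ is then $\ge 0$ by Proposition~\ref{prop:nonnegativity} (applied to the nonnegative function $|q|^p/p$), which already gives a weak form of the inequality; the Hardy-type term $\int_\Omega |q|^p/d(x)^s\,dx$ is nonnegative and will be discarded or, where needed, bounded below by $c\|q\|_{L^p}^p$ after noting $d(x)\le\mathrm{diam}(\Omega)$.

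The heart of the matter, and the reason for the elaborate case table, is to produce the \emph{coercive} term $c_1\|\l^{s/2}(|q|^{p/2})\|_{L^2}^2$ with a positive constant whenever the table claims $c_1>0$. The idea is to not discard $\int_\Omega\l^s(\frac1p|q|^p)\,dx$ but to relate it to $\|\l^{s/2}(|q|^{p/2})\|_{L^2}^2$. Writing $|q|^p=(|q|^{p/2})^2$ and setting $r:=|q|^{p/2}$, one has $\int_\Omega \l^s(r^2)\,dx$; using the quadratic form identity \eqref{prodfor1} for $\|\l^{s/2}r\|_{L^2}^2$ together with the representation \eqref{intrep} (or, equivalently, the Córdoba--Córdoba inequality applied a second time with $\Phi(f)=f^2$ to the function $r=|q|^{p/2}$ where it belongs to $\mathcal D(\l^s)$), one gets
\[
\l^s(r^2) \;\le\; 2r\,\l^s r \;-\; \frac{c}{d(x)^s}\,r^2 \;\le\; 2r\,\l^s r,
\]
pointwise, hence $\int_\Omega\l^s(r^2)\,dx\le 2\int_\Omega r\,\l^s r\,dx = 2\|\l^{s/2}r\|_{L^2}^2$ after the duality identity (i) of the cited Proposition. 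But we need a \emph{lower} bound on $\int_\Omega q|q|^{p-2}\l^s q\,dx$ in terms of $\|\l^{s/2}r\|_{L^2}^2$, so I would instead expand $q|q|^{p-2}\l^s q$ directly via \eqref{intrep}: writing $q|q|^{p-2}\l^s q = c_s\int_0^\infty q|q|^{p-2}(q-e^{t\Delta_D}q)\,t^{-1-s/2}\,dt$ and comparing with $r\l^s r = c_s\int_0^\infty r(r-e^{t\Delta_D}r)\,t^{-1-s/2}\,dt$, the problem reduces to a pointwise (in $x,y$) algebraic inequality of the form
\[
a|a|^{p-2}(a-b) \;\ge\; c_1\,\big(|a|^{p/2}-\mathrm{sgn}(a)|b|^{p/2}\big)\big(|a|^{p/2}-\mathrm{sgn}(ab)|b|^{p/2}\big)\cdot(\text{something})\;+\;c_2(\dots),
\]
more precisely the scalar inequality: for all $a,b\in\R$ and $p\ge2$,
\[
a|a|^{p-2}(a-b) \;\ge\; c_1\big(|a|^{p/2}-|b|^{p/2}\big)^2 \;+\; \big(1-c_1-\tfrac{?}{}\big)\,\text{(nonneg.)},
\]
which after using $(|a|^{p/2}-|b|^{p/2})^2\le |a-b|\cdot(\dots)$ and tracking constants gives the sharp $c_1$ depending on $p$ and on whether $p=2,4$, $2<p<4$, or $p>4$. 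This elementary but delicate scalar inequality — establishing the largest $c_1\ge 0$ for which $a|a|^{p-2}(a-b) - c_1(|a|^{p/2}-|b|^{p/2})^2$ remains $\ge 0$ (or controllable by the heat-kernel positivity) in each $p$-range — is where the case table $(c_1,c_2)\in\{(\frac1p,\dots),(0,\dots),(2-\frac4p,\dots),(\frac4p,\dots)\}$ comes from; this is the step I expect to be the main obstacle, and it is presumably the point where the paper improves on \cite{constantin2014unique} by pushing the range of $p$.

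Once the pointwise inequality is in hand, I would integrate in $t$ against $c_s t^{-1-s/2}\,dt$ and in $x,y$ against the heat kernel $H_D(x,y,t)$, using its symmetry and $\int_\Omega H_D(x,y,t)\,dy\le 1$ (maximum principle), exactly as in the proof of Proposition~\ref{prop:nonnegativity}, to convert the scalar inequality into
\[
\int_\Omega q|q|^{p-2}\l^s q\,dx \;\ge\; c_1\|\l^{s/2}(|q|^{p/2})\|_{L^2}^2 \;+\; (\text{Hardy term}) \;+\; (\text{leftover nonneg. integral}).
\]
Discarding the Hardy term and the leftover, and then applying $\|\l^{s/2}(|q|^{p/2})\|_{L^2}^2\ge\lambda_1^{s/2}\|q\|_{L^p}^p$ to whatever fraction of the coercive term is needed, produces $c_2\|q\|_{L^p}^p$ with the stated constant; in the ranges where $c_1=0$ one instead keeps the Hardy term and invokes the domain-dependent Hardy inequality $\int_\Omega |q|^p/d(x)^s\,dx\ge C_{\Omega,s}\|q\|_{L^p}^p$ (valid here since $s<2$ and, in the relevant subranges, $q$ vanishes on $\partial\Omega$, so $|q|^{p/2}\in\mathcal D(\l^{s/2})\hookrightarrow H^{s/2}_0$), which accounts for the constants $C_{\Omega,s}(1-\frac2p)$ and $C_{\Omega,s}(\frac4p-1)$. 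Throughout, the regularization $J_\epsilon$ from Section~\ref{sec3} together with Propositions~\ref{prop:commute}, \ref{regop} and Theorem~\ref{thm:Lp} can be used to justify that all manipulations (integration by parts, Fubini, passing limits in the truncation $(\l^s)_\eta$) are legitimate for the smooth $q$ under consideration, and then the truncation parameter $\eta\to0$ as in Proposition~\ref{prop:nonnegativity}.
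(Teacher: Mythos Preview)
Your approach diverges substantially from the paper's, and the route you sketch does not lead to the stated constants or case table.

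The paper does \emph{not} establish a scalar inequality of the type $a|a|^{p-2}(a-b)\ge c_1(|a|^{p/2}-|b|^{p/2})^2$ via the integral representation. Instead it iterates the C\'ordoba--C\'ordoba inequality twice, with different power functions. The first application is with $\Phi(f)=\tfrac12 f^2$, giving pointwise $q\,\l^s q\ge \tfrac12\l^s(q^2)$, hence $\int_\Omega q|q|^{p-2}\l^s q\,dx\ge \tfrac12\int_\Omega|q|^{p-2}\l^s(q^2)\,dx$. The second application is the key step and is what produces the coercive term \emph{directly}: one writes $|q|^{p-2}=|q|^{p/2}\cdot(q^2)^{(p-4)/4}$ (when $p>4$) and applies Proposition~\ref{corcor} to the function $g=q^2$ with $\Phi(g)=\tfrac4p g^{p/4}$, so that $\Phi'(g)\l^s g\ge \l^s\Phi(g)+\tfrac{c}{d^s}(g\Phi'(g)-\Phi(g))$ becomes $(q^2)^{(p-4)/4}\l^s(q^2)\ge \tfrac4p\l^s(|q|^{p/2})+\tfrac{c}{d^s}\cdot\tfrac{p-4}{p}|q|^{p/2}$. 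Multiplying by $|q|^{p/2}$ and integrating yields $\tfrac4p\|\l^{s/2}(|q|^{p/2})\|_{L^2}^2$ immediately --- no scalar inequality, no kernel expansion. For $2<p<4$ the exponent $(p-4)/4$ is negative, so one first integrates by parts to transfer $\l^s$ onto $|q|^{p-2}$ and then applies C\'ordoba--C\'ordoba to $g=|q|^{p-2}$ with $\Phi(g)=\tfrac{2(p-2)}{p}g^{p/(2(p-2))}$; this is where Proposition~\ref{prop:regularity} enters, to guarantee $|q|^{p-2}\in\mathcal D(\l^s)$. When that membership fails (the ranges you see with $c_1=0$), one cannot integrate by parts, and the paper instead applies C\'ordoba--C\'ordoba to $g=q^2$ with $\Phi(g)=\tfrac2p g^{p/2}$ and discards $\int_\Omega\l^s(|q|^p)\,dx$ via Proposition~\ref{prop:nonnegativity}, keeping only the Hardy remainder.

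So the case table does \emph{not} arise from optimizing a scalar inequality in $p$; it arises from whether $|q|^{p-2}\in\mathcal D(\l^s)$ (which governs whether the integration-by-parts route is available) --- this is precisely the content of Proposition~\ref{prop:regularity} with $\beta=p-2$, and explains exactly the subdivision at $p=3$, $p=4$, and $s=p-2$, $s=1$. Your first step (C\'ordoba--C\'ordoba with $\Phi(f)=\tfrac1p|f|^p$) lands on $\int_\Omega\l^s(|q|^p)\,dx$, from which, as you correctly observe, the inequality $\l^s(r^2)\le 2r\l^s r$ goes the wrong way; the paper avoids this cul-de-sac by choosing $\Phi$ in the second application so that the \emph{output} of C\'ordoba--C\'ordoba is $\l^s(|q|^{p/2})$, already paired with the factor $|q|^{p/2}$, so that the coercive norm appears without any further manipulation.
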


\begin{proof} The cases $p=2$ and $p=4$ are trivial. Indeed,  
\be 
\int_{\Omega} q(x) \l^s q(x) dx = \|\l^{\fr{s}{2}} q\|_{L^2}^2 \geq \fr{1}{2} \|\l^{\fr{s}{2}} q\|_{L^2}^2 + \fr{1}{2} \lambda_1^{\frac{s}2}\|q\|_{L^2}^2,
\ee 
in view of the continuous embedding $\mathcal{D}(\l^{\fr{s}{2}}) \subset L^2$, whereas 
\be 
\int_{\Omega} |q(x)|^2 q(x) \l^{s}q(x) dx
\ge \fr{1}{2} \int_{\Omega} |q(x)|^2  \l^{s} q(x)^2 dx \geq \fr{1}{4} \|\l^{\fr{s}{2}} q^2\|_{L^2}^2 + \fr{1}{4} \lambda_1^{\frac{s}2}\|q\|_{L^4}^4
\ee
in view of the C\'ordoba-C\'ordoba inequality.

Now suppose that $p > 2$ with $p \neq 4$. 
We note that 
\be 
\int_{\Omega} |q|^{p-2} (q \l^{s}{q}) dx 
\geq \fr{1}{2} \int_{\Omega} |q|^{p-2} \l^{s} {q^{2}} dx
\ee 
and distinguish three different cases:

\textbf{Case 1.} $2 < p < 4$ and $s > 1$, or $2<p<3$ and $p-2\leq s \leq 1$. By the C\'ordoba-C\'ordoba inequality and Proposition~\ref{prop:nonnegativity}, we have
\be 
\beg{aligned}
\int_{\Omega} |q|^{p-2} \l^s q^2 dx
&= \int_{\Omega} \left(q^2 \right)^{\frac{p-2}{2}} \l^s q^2 dx
\\&\ge \frac{2}{p} \int_{\Omega} \l^s (|q|^p) dx + \int_{\Omega} \frac{C}{d(x)^s} \left(q^2 (q^2)^{\frac{p-2}2} - \frac{2}{p} (q^2)^{\frac{p}{2}}\right) dx
\\&\ge \frac{C}{diam(\Omega)^s} \left(1 - \frac{2}{p} \right) \|q\|_{L^p}^p.
\end{aligned}
\ee

\textbf{Case 2.} $2<p<3$ and $0<s<p-2$, or $3 \leq p < 4$ and $s \le 1$. In this case, thanks to Proposition~\ref{prop:regularity}, one has $|q|^{p-2} \in \mathcal D(\l^s)$. Then integrating by parts and using the  C\'ordoba-C\'ordoba inequality, we have 
\beg{align}
&\int_{\Omega} |q|^{p-2} \l^{s} {q^{2}} dx 
= \int_{\Omega} |q|^2 \l^{s}(|q|^{p-2} )  dx  
=  \int_{\Omega} |q|^{\fr{p}{2}} (|q|^{p-2})^{{\fr{4-p}{2(p-2)}}}\l^{s}(|q|^{p-2} )  dx  \nonumber
\\&\ge \int_{\Omega} |q|^{\fr{p}{2}}\left(\fr{2p-4}{p} \l^{s} |q|^{\fr{p}{2}} + \fr{C}{\mathrm{diam}(\Omega)^s}.\fr{4-p}{p} |q|^{\fr{p}{2}} \right)  dx  \nonumber
\\&= \fr{2p-4}{p} \|\l^{\fr{s}{2}}(|q|^{p/2})\|_{L^2}^2 + \fr{C}{\mathrm{diam}(\Omega)^s}\fr{4-p}{p} \|q\|_{L^p}^p
\end{align}  where $C$ is a positive constant depending only on $\Omega$ and $s$. {The key difference between Case 1 and Case 2 is whether $|q|^{p-2} \in \mathcal D(\l^s)$ or not.}

\textbf{Case 3.} $p > 4$. This case is treated exactly as Case 2 but without integrating by parts. In fact, 
\beg{align}
&\int_{\Omega} |q|^{p-2} \l^{s} {q^{2}} dx 
=  \int_{\Omega} |q|^{\fr{p}{2}} (|q|^2)^{\fr{p-4}{4}}\l^{s}(|q|^2 )  dx  \nonumber
\\&\ge \int_{\Omega} |q|^{\fr{p}{2}}\left(\fr{4}{p} \l^{s} |q|^{\fr{p}{2}} + \fr{C}{\mathrm{diam}(\Omega)^s}.\fr{p-4}{p} |q|^{\fr{p}{2}} \right)  dx  \nonumber
\\&= \fr{4}{p} \|\l^{\fr{s}{2}}(|q|^{p/2})\|_{L^2}^2   + \fr{C}{\mathrm{diam}(\Omega)^s}\fr{p-4}{p} \|q\|_{L^p}^p.
\end{align}
\end{proof}

\section{Application: Subcritical SQG Equation}
\la{sec6}

Let $\Omega \subset \RR^2$ be a bounded domain with a smooth boundary, and $-\Delta_D$ be the two-dimensional Laplacian with homogeneous Dirichlet boundary conditions. For $\alpha \in (1,2)$, we consider the forced subcritical surface quasi-geostrophic (SQG) model
\be \label{SQGmodel}
\begin{cases}
\pa_t q + u \cdot \na q + {\l^{\alpha}} q = f
\\ u = R^{\perp} q
\\ q|_{\pa \Omega} = 0
\\q(x,0) = q_0(x)
\end{cases}
\ee on $\Omega$, where $q= q(x,t)$ is a scalar function, $f = f(x)$ is a time-independent bulk forcing satisfying $f|_{\pa \Omega}=0$, $\l^{\alpha} = (-\D_D)^{\fr{\alpha}{2}}$ is the fractional Laplacian of order $\alpha$, and $R^{\perp} = \na^{\perp} \l^{-1} = (-\p_2 \l^{-1},\pa_1 \l^{-1})$ is a rotation of the two-dimensional Riesz transform. Note that when $q$ is regular enough (e.g., $q\in H_0^1(\Omega)$), $(u\cdot n)|_{\pa \Omega}=0$ as the stream function $\psi=\l^{-1}q$ vanishes at the boundary and its gradient is normal to the boundary.

The SQG equation was initially proposed in \cite{constantin1994singular} and its global regularity was addressed in the absence (\cite{caffarelli2010drift, constantin2012nonlinear, kiselev2007global, zelati2016global} and reference therein) and presence (\cite{ constantin2023global, constantin2018local, nguyen2018global} and references therein) of physical boundaries. The long-time behavior of solutions to the unforced SQG equation was studied in \cite{constantin1999behavior} on the whole space for any $\alpha \in (0,2)$ and in \cite{constantin2015long} for the forced critical equation on two-dimensional periodic boxes equipped with periodic boundary conditions. 

In this section, we study the long-time dynamics of the subcritical SQG model in the presence of spatial boundaries.

\subsection{Construction of Weak Solutions in Low Regular Lebesgue Spaces}

In this subsection, we prove the existence of global-in-time weak solutions under weak regularity assumptions imposed on the initial data. To this end, we consider a spectral-parabolic regularization of  system \eqref{SQGmodel} and use it to construct solutions in some Lebesgue spaces. Namely, for $\epsilon \in (0,1)$, we consider the $\epsilon$-approximate system 
\be \label{epsreg}
\begin{cases}
\pa_t q^{\epsilon} + u^{\epsilon} \cdot \na q^{\epsilon} + {\l^{\alpha}} q^{\epsilon} -\epsilon \Delta_D q^{\epsilon} = J_{\epsilon} f
\\ u^{\epsilon} = R^{\perp} q^{\epsilon}
\\ q^{\epsilon}|_{\pa \Omega} = 0
\\q^{\epsilon}(x,0) = J_{\epsilon} q_0(x)
\end{cases}
\ee on $\Omega$.
Using Proposition \ref{regop}, we construct global weak solutions to system \eqref{SQGmodel} with low regularity.

\begin{thm}\label{thm:sqg-1}
Let $q_0 \in \mathcal{D}(\l^{-\fr{1}{2}})$, $f \in \mathcal{D}(\l^{\fr{-1-\alpha}{2}}),$ and $T>0$ be an arbitrary positive time. The initial boundary value problem \eqref{SQGmodel} has a weak solution $q$ on $[0,T]$ such that 
\be \label{th1reg}
q \in L^{\infty} (0,T; \mathcal{D}(\l^{-\fr{1}{2}})) \cap L^2(0,T; \mathcal{D}(\l^{\fr{\alpha -1}{2}})).
\ee 
\end{thm}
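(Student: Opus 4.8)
The plan is to construct a solution to \eqref{SQGmodel} as a limit of solutions $q^\epsilon$ to the regularized system \eqref{epsreg}, following the standard vanishing-viscosity/spectral-mollification scheme but carried out at the low regularity level $\mathcal D(\l^{-1/2})$. First I would establish global existence of $q^\epsilon$ for fixed $\epsilon$: since \eqref{epsreg} contains the genuinely parabolic term $-\epsilon\Delta_D$ and the mollified data $J_\epsilon q_0$, $J_\epsilon f$ are smooth (by Proposition~\ref{regop}, $J_\epsilon$ maps into $\mathcal D(\l^k)$ for all $k$ with $\epsilon$-dependent bounds), a Galerkin approximation in the eigenbasis $\{w_j\}$ together with the energy estimate below yields a unique smooth global solution $q^\epsilon$. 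The key observation making the nonlinearity manageable is that $u^\epsilon=R^\perp q^\epsilon$ is divergence-free and tangent to $\partial\Omega$, so the transport term is antisymmetric in the appropriate pairings.

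The heart of the argument is a uniform-in-$\epsilon$ a priori estimate in the space \eqref{th1reg}. Testing \eqref{epsreg} against $\l^{-1}q^\epsilon$ and using the cancellation $(\l^{-1}(u^\epsilon\cdot\nabla q^\epsilon),q^\epsilon)_{L^2}$ — which should vanish or be controllable because $u^\epsilon=\nabla^\perp\l^{-1}q^\epsilon$ and $\l^{-1}q^\epsilon$ is precisely the stream function, so $u^\epsilon\cdot\nabla q^\epsilon = \nabla^\perp\psi^\epsilon\cdot\nabla q^\epsilon$ with $\psi^\epsilon = \l^{-1}q^\epsilon$, giving an exact divergence-form structure — I would obtain
\be
\frac12\frac{d}{dt}\|\l^{-1/2}q^\epsilon\|_{L^2}^2 + \|\l^{(\alpha-1)/2}q^\epsilon\|_{L^2}^2 + \epsilon\|\l^{1/2}q^\epsilon\|_{L^2}^2 = (J_\epsilon f,\l^{-1}q^\epsilon)_{L^2},
\ee
and the forcing term is bounded by $\|\l^{(-1-\alpha)/2}J_\epsilon f\|_{L^2}\|\l^{(\alpha-1)/2}q^\epsilon\|_{L^2}\le C\|\l^{(-1-\alpha)/2}f\|_{L^2}\|\l^{(\alpha-1)/2}q^\epsilon\|_{L^2}$ using \eqref{prop32}, which is absorbed by Young's inequality. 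Since $\|\l^{-1/2}J_\epsilon q_0\|_{L^2}\le C\|\l^{-1/2}q_0\|_{L^2}$ uniformly in $\epsilon$, integrating in time gives uniform bounds for $q^\epsilon$ in $L^\infty(0,T;\mathcal D(\l^{-1/2}))\cap L^2(0,T;\mathcal D(\l^{(\alpha-1)/2}))$. From the equation itself I would then read off a uniform bound on $\partial_t q^\epsilon$ in $L^2(0,T;\mathcal D(\l^{-m}))$ for some fixed $m>0$ (the worst term being $\l^\alpha q^\epsilon$, which lies in $L^2(0,T;\mathcal D(\l^{(\alpha-1)/2-\alpha}))$, and the nonlinear term handled via a product/commutator estimate applied to $u^\epsilon\cdot\nabla q^\epsilon = \nabla\cdot(u^\epsilon q^\epsilon)$ placed in a negative-order space).

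With these bounds in hand, I would extract a subsequence converging weak-$*$ in $L^\infty(0,T;\mathcal D(\l^{-1/2}))$, weakly in $L^2(0,T;\mathcal D(\l^{(\alpha-1)/2}))$, and — crucially — strongly in $L^2(0,T;\mathcal D(\l^{-1/2-\delta}))$ for small $\delta>0$ via the Aubin–Lions–Simon lemma, using the compact embedding $\mathcal D(\l^{(\alpha-1)/2})\hookrightarrow\hookrightarrow\mathcal D(\l^{-1/2-\delta})$ together with the uniform $\partial_t$ bound. The strong convergence is exactly what is needed to pass to the limit in the nonlinear term: writing the weak formulation against a smooth test function $\phi$ with the nonlinearity as $\int_0^T(u^\epsilon q^\epsilon,\nabla^\perp\l^{-1}\phi\;\text{-type object})$ one pairs the strongly convergent $q^\epsilon$ with the weakly convergent Riesz transform of $q^\epsilon$; since $R^\perp$ is bounded on the relevant spaces, the product passes to the limit. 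The viscous term $\epsilon\Delta_D q^\epsilon$ vanishes because $\epsilon\|\l^{1/2}q^\epsilon\|_{L^2(0,T;L^2)}^2$ is uniformly bounded so $\epsilon\Delta_D q^\epsilon\to 0$ in $L^1(0,T;\mathcal D(\l^{-3/2}))$, and $J_\epsilon f\to f$ in $\mathcal D(\l^{(-1-\alpha)/2})$ by Proposition~\ref{regop}. The main obstacle is precisely the low-regularity limit passage in the nonlinear term: at the level $q\in\mathcal D(\l^{-1/2})$ the product $uq$ is barely a distribution, so one must be careful to rewrite the nonlinearity in divergence form $\nabla\cdot(u q)$ (legitimate since $\nabla\cdot u=0$) and to verify that the bilinear map $(q,q)\mapsto R^\perp q\cdot\nabla q$ is weakly-strongly sequentially continuous from $\big(L^2(0,T;\mathcal D(\l^{(\alpha-1)/2}))\cap L^\infty(0,T;\mathcal D(\l^{-1/2}))\big)$ into the space of distributions — this is where the interpolation inequalities and the product estimates of Section~\ref{sec4} (or a direct commutator bound) are invoked, and where one must check the constraint $\alpha>1$ is actually used to give $q^\epsilon$ enough positive regularity for the pairing to close.
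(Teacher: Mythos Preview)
Your approach is essentially the paper's: the same regularized system, the same test function $\l^{-1}q^\epsilon$, the same energy identity, and the same Aubin--Lions compactness. Two places where you make life harder than the paper does: first, the nonlinear cancellation is not merely ``divergence-form structure'' but the exact pointwise identity $R^\perp q^\epsilon\cdot R q^\epsilon=0$ (after one integration by parts $\int(u^\epsilon\cdot\nabla q^\epsilon)\l^{-1}q^\epsilon\,dx=-\int(R^\perp q^\epsilon\cdot Rq^\epsilon)q^\epsilon\,dx=0$), so there is nothing to control; second, since $\alpha>1$ the compact embedding $\mathcal D(\l^{(\alpha-1)/2})\hookrightarrow\hookrightarrow L^2$ already gives strong convergence in $L^2(0,T;L^2)$, and with the nonlinearity estimated by $|(u^\epsilon q^\epsilon,\nabla\Phi)|\le C\|q^\epsilon\|_{L^{4/(3-\alpha)}}^2\|\nabla\Phi\|_{L^{2/(\alpha-1)}}\le C\|\l^{(\alpha-1)/2}q^\epsilon\|_{L^2}^2\|\Phi\|_{H^2}$ (so $\partial_t q^\epsilon\in L^1(0,T;\mathcal D(\l^{-2}))$), the limit passage is elementary and does not require any of the Section~\ref{sec4} product machinery.
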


\begin{proof}
For each $\epsilon > 0$, the $\epsilon$-regularized system \eqref{epsreg} has a unique global smooth solution satisfying $q^\epsilon\in \mathcal D(\l^s)$ for all $s\geq 0$ and $(-\Delta_D)^{\ell} q^{\epsilon}|_{\pa \Omega} = 0$ for all $\ell \in \NN$ (see \cite{elie}). 
Below we provide {\it a priori} bounds and pass to the limit as $\epsilon \rightarrow 0$. 

We multiply the equation obeyed by $q^{\epsilon}$  in \eqref{epsreg} by $\l^{-1}q^{\epsilon}$, integrate spatially over $\Omega$, and obtain the energy evolution 
\be 
\fr{1}{2} \fr{d}{dt} \|\l^{-\fr{1}{2}}q^{\epsilon}\|_{L^2}^2 + \|\l^{\fr{\alpha - 1}{2}} q^{\epsilon}\|_{L^2}^2 + \epsilon \|\l^{\fr{1}{2}} q^{\epsilon}\|_{L^2}^2 
=- \int_{\Omega} (R^{\perp} q^{\epsilon} \cdot \na q^{\epsilon}) \l^{-1} q^{\epsilon} dx + \int_{\Omega} J_{\epsilon} f \l^{-1} q^{\epsilon} dx  .
\ee Integrating by parts, exploiting the homogeneous Dirichlet boundary condition for $q^{\epsilon}$, using the divergence-free condition obeyed by $u^{\epsilon} = R^{\perp} q^{\epsilon}$, and applying the pointwise cancellation law $R^{\perp} q^{\epsilon} \cdot Rq^{\epsilon} = 0$, we infer that 
\be 
- \int_{\Omega} (R^{\perp} q^{\epsilon} \cdot \na q^{\epsilon}) \l^{-1} q^{\epsilon} dx = \int_{\Omega} (R^{\perp} q^{\epsilon} \cdot \nabla \l^{-1} q^{\epsilon}) q^{\epsilon} dx
= \int_{\Omega} (R^{\perp} q^{\epsilon} \cdot Rq^{\epsilon}) q^{\epsilon} dx = 0
\ee holds. In view of the Cauchy-Schwarz inequality, the uniform-in-$\epsilon$ estimate \eqref{prop32},  and Young inequality for products, we have  
\be 
\left|\int_{\Omega} J_{\epsilon}f \l^{-1} q^{\epsilon} dx \right| 
\le \fr{1}{2} \| \l^{\fr{\alpha - 1}{2}}q^{\epsilon}\|_{L^2}^2 + C \|\l^{\frac{-1-\alpha}{2}} f\|_{L^2}^2.
\ee This yields the energy inequality
\be 
 \fr{d}{dt} \|\l^{-\fr{1}{2}}q^{\epsilon}\|_{L^2}^2 + \|\l^{\fr{\alpha - 1}{2}} q^{\epsilon}\|_{L^2}^2 \le C\|\l^{\frac{-1-\alpha}{2}} f\|_{L^2}^2,
\ee from which we deduce that 
\be 
\beg{aligned}
&\|\l^{-\fr{1}{2}}q^{\epsilon}(t)\|_{L^2}^2 + \int_{0}^{t} \|\l^{\fr{\alpha - 1}{2}} q^{\epsilon}(s) \|_{L^2}^2 ds \\&\quad\quad\le \|\l^{-\fr{1}{2}}J_{\epsilon} q_0\|_{L^2}^2 +C\|\l^{\frac{-1-\alpha}{2}} f\|_{L^2}^2t \le C\|
\l^{-\fr{1}{2}} q_0\|_{L^2}^2 + C\|\l^{\frac{-1-\alpha}{2}} f\|_{L^2}^2t
\end{aligned}
\ee after integrating in time from $0$ to $t$. Consequently, the family of regularized solutions $\left\{q^{\epsilon}\right\}_{\epsilon \in (0,1)}$ is uniformly bounded (in $\epsilon$) in the Lebesgue spaces $L^{\infty}(0,T;\mathcal{D}(\l^{-\fr{1}{2}}))$ and $L^2(0,T; \mathcal{D}(\l^{\fr{\alpha-1}{2}}))$. 

The family of nonlinear terms $u^{\epsilon} \cdot \na q^{\epsilon}$ is uniformly bounded in $L^1(0, T; \mathcal{D}(\l^{-2}))$. Indeed, we have 
\be 
\beg{aligned}
\left|\int_{\Omega} (u^{\epsilon} \cdot \na q^{\epsilon}) \Phi dx \right|
= \left|\int_{\Omega} (u^{\epsilon} q^{\epsilon}) \cdot \na \Phi dx \right|
&\le C\|R^{\perp} q^{\epsilon}\|_{L^{\fr{4}{3-\alpha}}} \|q^{\epsilon}\|_{L^{\fr{4}{3 - \alpha}}} \|\na \Phi\|_{L^{\fr{2}{\alpha - 1}}}
\\&\le C\|\l^{\fr{\alpha - 1}{2}} q^{\epsilon}\|_{L^2}^2 \|\Phi\|_{H^2}
\end{aligned}
\ee for all $\Phi \in \mathcal{D}(\l^2)$, where the first equality follows from the divergence-free condition obeyed by $u^{\epsilon}$ and the last inequality holds due to the boundedness of the Riesz transform on $L^p$ spaces for $p \in (1,\infty)$ and classical continuous Sobolev embeddings. Since the other terms $\l^{\alpha} q^{\epsilon}$,  $\Delta_D q^{\epsilon}$, and $\mathcal J_\epsilon f$ are also uniformly bounded in $L^1(0,T; \mathcal{D}(\l^{-2}))$, so is the family of time derivatives $\left\{\pa_tq^{\epsilon} \right\}_{\epsilon \in (0,1)}$. Due to the compact embedding of $\mathcal{D}(\l^{\fr{\alpha - 1}{2}})$ in $L^2$ for $\alpha \in (1,2)$, and the continuous embedding of $L^2$ in $\mathcal{D}(\l^{-2})$, we can apply the Aubin-Lions theorem and Banach Alaoglu theorem to deduce that $\left\{q^{\epsilon}\right\}_{\epsilon \in (0,1)}$ has a subsequence that converges strongly in $L^2(0,T; L^2)$ and weakly in $L^2(0,T; \mathcal{D}(\l^{\fr{\alpha-1}{2}}))$ to a weak solution $q$ of \eqref{SQGmodel} obeying the desired regularity property \eqref{th1reg}. We omit further details. 
\end{proof}

\subsection{Construction of Unique Strong Solutions}

In this subsection, we prove the uniqueness of solutions to the model \eqref{SQGmodel} provided that the initial data is $L^{p}$ regular for a sufficiently large number $p$.

\beg{thm}\label{thm:sqg-2}
Let $\alpha \in (1,2)$ and $\delta \in (0, \frac{\alpha-1}2)$.  Let $q_0 \in L^{\fr{1}{\delta}}$, $f \in L^{\fr{1}{\delta}}$, and $T>0$ be an arbitrary positive time. The initial boundary value problem \eqref{SQGmodel} has a unique solution $q$ on $[0,T]$ such that 
\be \label{reg:q}
q \in L^{\infty} (0,T; L^{\frac{1}{\delta}}) \cap L^2(0,T; \mathcal{D}(\l^{\frac{\alpha}{2}})).
\ee 
\end{thm}

\begin{proof}
    We multiply the $q^{\epsilon}$ equation in \eqref{epsreg} by $q^{\epsilon}$ and integrate spatially over $\Omega$. We obtain the differential inequality
    \be \la{strong1}
\frac{d}{dt}\|q^{\epsilon}\|_{L^2}^2
+ \|\l^{\fr{\alpha}{2}} q^{\epsilon}\|_{L^2}^2 \le C\|\l^{-\fr{\alpha}{2}} f\|_{L^2}^2
    \ee after making use of the cancellation law 
    \be 
\int_{\Omega} u^{\epsilon} \cdot \na q^{\epsilon} q^{\epsilon} dx = 0,
    \ee and the Cauchy-Schwarz estimate
    \be 
\int_{\Omega} J_{\epsilon} f q^{\epsilon} dx = \int_{\Omega} \l^{-\fr{\alpha}{2}} J_{\epsilon}f \l^{\fr{\alpha}{2}}q^{\epsilon} dx 
\le \frac{1}{2}\|\l^{\fr{\alpha}{2}}q^{\epsilon} \|_{L^2}^2 + \frac{1}{2} \|\l^{-\fr{\alpha}{2}} J_{\epsilon}f \|_{L^2}^2.
    \ee Integrating \eqref{strong1} in time from $0$ to $t$ and taking the supremum over $[0,T]$ yields the uniform-in-$\epsilon$ regularity property
    \be 
q^{\epsilon} \in L^{\infty}(0,T; L^2) \cap L^2(0,T; \mathcal{D}(\l^{\fr{\alpha}{2}})). 
    \ee  
Now we multiply the first equation in $\eqref{epsreg}$ by $|q^{\epsilon}|^{\frac{1}{\delta} - 2} q^{\epsilon}$ and integrate over $\Omega$. We obtain 
\be 
\delta \fr{d}{dt} \|q^{\epsilon}\|_{L^{\fr{1}{\delta}}}^{\fr{1}{\delta}} + \int_{\Omega} |q^{\epsilon}|^{\frac{1}{\delta} - 2} q^{\epsilon}\l^{{\alpha}} q^{\epsilon} dx - \epsilon \int_{\Omega} |q^{\epsilon}|^{\frac{1}{\delta} - 2} q^{\epsilon} \Delta_{{D}} q^{\epsilon} dx = \int_{\Omega} |q^{\epsilon}|^{\frac{1}{\delta} - 2} q^{\epsilon} J_{\epsilon} f dx.
\ee By Theorem~\ref{thm:poin}, the nonlocal term $\int_{\Omega} |q^{\epsilon}|^{\frac{1}{\delta} - 2} q^{\epsilon}\l^{{\alpha}} q^{\epsilon} dx$ is nonnegative. Integrating by parts, we also obtain the nonnegativity of the regularization term $- \epsilon \int_{\Omega} |q^{\epsilon}|^{\frac{1}{\delta} - 2} q^{\epsilon} \Delta_{{D}} q^{\epsilon} dx$. For the forcing term, one has
\be 
\left|\int_{\Omega} |q^{\epsilon}|^{\frac{1}{\delta} - 2} q^{\epsilon}J_{\epsilon} f  dx \right|
\le C\|q^{\epsilon}\|_{L^{\fr{1}{\delta}}}^{\fr{1}{\delta}-1} \|J_{\epsilon}f\|_{L^{\frac{1}{\delta}}}
\ee thanks to H\"older's inequality. This gives the differential inequality 
\be 
\fr{d}{dt} \|q^{\epsilon}\|_{L^{\fr{1}{\delta}}} \le C\|J_{\epsilon}f\|_{L^{\fr{1}{\delta}}} \le C\|f\|_{L^{\fr{1}{\delta}}},
\ee after using the uniform-in-$\epsilon$ boundedness of the operator $J_{\epsilon}$ on $L^p$ spaces established in Lemma \ref{lpb}. As a consequence, the uniform-in-$\epsilon$ regularity 
\be 
q^{\epsilon} \in L^{\infty}(0,T; L^{\frac{1}{\delta}})
\ee follows. By passing to the limit $\epsilon\to0$, we conclude that a solution $q$ to the model \eqref{SQGmodel} also satisfies $q \in L^{\infty}(0,T; L^{\frac{1}{\delta}})$. 
{
Thanks to the regularity of $q$, we can also obtain that 
\begin{equation}\label{reg:qt}
    \partial_t q \in L^2(0,T; \mathcal D(\l^{-1-\frac\alpha2}).
\end{equation}
To see this, we only need to check the regularity of the nonlinear term $u\cdot \nabla q$. Indeed, consider any test function $\Phi\in L^2(0,T; \mathcal D(\l^{1+\frac\alpha2})$ and estimate
\begin{align*}
    \left|\int u\cdot \nabla q \Phi dx \right|= \left|- \int uq\cdot \na \Phi dx \right| \leq & \|q\|_{L^{\frac1\delta}} \|\Phi\|_{W^{1,\frac4{2-\alpha}}} \|q\|_{L^{\frac4{2-4\delta+\alpha}}} 
    \\
    \leq & C \|q\|_{L^{\frac1\delta}} \|\l^{1+\frac\alpha2} \Phi\|_{L^2} \|\l^{\frac\alpha2} q\|_{L^2}
\end{align*}
where we have used H\"older's inequality, the boundedness of Riesz transform on $L^{\frac1\delta}$, the embeddings $\mathcal D(\l^{1+\frac\alpha2}) \subset H^{1+\frac\alpha2} \subset W^{1,\frac4{2-\alpha}}$ and $\mathcal D(\l^{\frac\alpha2}) \subset H^{\frac\alpha2} \subset L^{\frac4{2-4\delta+\alpha}}$ that hold as $\delta<\frac{\alpha-1}2 < \frac\alpha2$. As $q$ satisfies \eqref{reg:q}, we conclude that $u\cdot \nabla q \in L^2(0,T; \mathcal D(\l^{-1-\frac\alpha2})$, and thus \eqref{reg:qt} follows.
}

As for uniqueness, suppose $q_1$ and $q_2$ are solutions to the model \eqref{SQGmodel} with the same initial data $q_1(0)= q_2(0)$ and homogeneous Dirichlet boundary conditions. We denote by $q$ and $u$ the differences $q = q_1 - q_2$ and $u = u_1 - u_2$. Then $q$ evolves according to 
\be \la{uniq1}
\pa_t q + \l^{\alpha}q = - u \cdot \na q_1 - u_2 \cdot \na q.
\ee 
Thanks to \eqref{reg:qt}, we can multiply this latter equation by $\l^{-1}q$ and integrate over $\Omega$ to obtain 
\be \label{uniq2}
\frac{1}{2} \frac{d}{dt} \|\l^{-\fr{1}{2}} q\|_{L^2}^2 + \|\l^{\frac{\alpha - 1}{2}} q\|_{L^2}^2 = - \int_{\Omega} u \cdot \na q_1 \l^{-1} q dx - \int_{\Omega} u_2 \cdot \na q \l^{-1} q dx. 
\ee The first nonlinear term on the right-hand side of \eqref{uniq2} vanishes as a consequence of the orthogonality property $R^{\perp} q \cdot Rq = 0$. In view of H\"older's inequality with exponents $\frac{2}{1-\delta}, \frac{2}{1-\delta}, \delta$, and the boundedness of the Riesz transform on $L^p$ spaces, it holds that 
\be 
\left|\int_{\Omega} u_2 \cdot \na q \l^{-1} q dx \right|
= \left|\int_{\Omega} u_2 Rq q dx \right| \le C\|u_2\|_{L^{\fr{1}{\delta}}} \|Rq\|_{L^{\frac{4}{2-2\delta}}} \|q\|_{L^{\fr{4}{2 - 2\delta}}}
\le C\|q_2\|_{L^{\fr{1}{\delta}}} \|q\|_{L^{\fr{4}{2 - 2\delta}}}^2.
\ee By the continuous embeddings of $\mathcal{D}(\l^{\delta})$ in $H^{\delta}$ and $H^{\delta}$ in $L^{\frac2{1-\delta}}$, the fact that the spaces $H_0^{s}$ and $\mathcal{D}(\l^s)$ are equivalent for $s \in (\fr{1}{2}, 1)$, and the Brezis-Mironescu fractional interpolation inequality, we have 
\begin{equation} 
\beg{aligned}
&\|q\|_{L^{\fr{4}{2-2\delta}}}^2
= \|\l^{\fr{1}{2}} (\l^{-\frac{1}{2}} q)\|_{L^{\fr{4}{2-2\delta}}}^2
\le C\|\l^{\fr{1}{2} + \delta} (\l^{-\frac{1}{2} } q)  \|_{L^2}^2
\le C\|\l^{-\frac{1}{2}} q\|_{H^{\fr{1}{2} + \delta} }^2
\\&\le C\|\l^{-\frac{1}{2} } q\|_{L^2}^{2\left(1 - \frac{1}{\alpha} - \frac{2\delta}{\alpha} \right)} \| \l^{-\frac{1}{2} } q\|_{H^{ \fr{\alpha}{2}}}^{2\left(\frac{1}{\alpha} + \frac{2\delta}{\alpha} \right)}
\le C\|\l^{-\fr{1}{2}} q\|_{L^2}^{2\left(1 - \frac{1}{\alpha} - \frac{2\delta}{\alpha} \right)} \|\l^{\fr{\alpha - 1}{2}} q\|_{L^2}^{2\left(\frac{1}{\alpha} + \frac{2\delta}{\alpha}  \right)}.
\end{aligned}
\end{equation} 
Note that $1-\frac1\alpha-\frac{2\delta}{\alpha}$ and $\frac1\alpha+\frac{2\delta}\alpha \in (0,1)$ since $\delta < \frac{\alpha-1}2$. 
By making use of Young's inequality for products with exponents $\frac{\alpha}{\alpha-1-2\delta}$ and $\frac{\alpha}{1+2\delta}$, we deduce that 
\be 
\frac{d}{dt} \|\l^{-\fr{1}{2}} q\|_{L^2}^2 \le C\|q_2\|_{L^{\fr{1}{\delta}}}^{\fr{\alpha}{\alpha -1 - 2\delta}} \|\l^{-\fr{1}{2}} q\|_{L^2}^2
\ee holds. An application of the Gronwall inequality yields the bound 
\be 
 \|\l^{-\fr{1}{2}} q(t)\|_{L^2}
 \le \|\l^{-\fr{1}{2}} q_0\|_{L^2} \exp \left\{C\int_{0}^{t} \|q_2(s)\|_{L^{\fr{1}{\delta}}}^{\fr{\alpha}{\alpha -1 - 2\delta}} ds   \right\}
\ee for all $t \in [0,T]$. As the initial data $q_0$ vanishes, we deduce that $q_1 = q_2$ for a.e. $x \in \Omega$ and $t \in [0,T]$.  
\end{proof}

\subsection{Properties of the Solution Map}

Let $\alpha \in (1,2)$. Fix a $\delta \in \left(0, \frac{\alpha - 1}{2}\right)$ and a time  $t \ge 0$ and  define the instantaneous  
solution map associated with the forced subcritical SQG equation 
\be 
\mathcal{S}_{\alpha}(t): L^{\fr{1}{\delta}}  \mapsto L^{\fr{1}{\delta}}  
\ee by 
\be 
\mathcal{S}_{\alpha}(t) q_0= q(t),
\ee where $q(t)$ is the unique solution of \eqref{SQGmodel} with initial datum $q_0$.

In this section, we investigate the properties of this solution map.

We start by proving the existence of a ball $\mathcal B_{\rho}$, compact in $H^{1}$, such that the image of $\mathcal B_{\rho}$ under $\mathcal{S}_{\alpha}(t)$ lies in $\mathcal B_{\rho}$ for large times. We need the following uniform Gronwall lemma:

\beg{lem} \cite{abdo2023long} \label{uniformgronwall} Let $y(t)$ be a nonnegative function of time $t$ that satisfies the differential inequality
\be  \la{gron1}
\fr{d}{dt} y + cy \le C_1 + C_2F_1 + C_3F_2y^n,
\ee where $c>0$ is a positive real number, $C_1, C_2$ and $C_3$ are nonnegative real numbers, $n$ is a nonnegative integer, and $F_1$ and $F_2$ are nonegative functions of time $t$. Suppose there exists a time $t_0$ and a positive number $R$ such that $y(t_0) < \infty$ and, for any $t \ge t_0$, it holds that 
\be \la{gron2}
\int_{t}^{t+1} F_1(s) ds \le R \quad \text{if $C_3 = 0$},
\ee 
\be \la{gron33}
\int_{t}^{t+1} \left[F_1(s) + F_2(s)y^{n-1}(s) + y(s) \right] ds \le R \quad \text{if $C_3\neq 0$ and $n \ge 1$.}
\ee Then there exists a positive constant $\rho=\rho(c,C_1, C_2, C_3, R)$ such that
for all times $t \ge t_0 + 1$, 
\be \la{gron4}
y(t) \le \rho.
\ee 
\end{lem}

\beg{prop} \la{absball} Let $\alpha\in(1,2)$ and fix some $s \in \left(1, \frac{\alpha + 1}{2}\right).$ 
Suppose $f \in \mathcal{D}(\l^{s- \frac{\alpha}{2}})$.  Then there exists a radius $\rho > 0$ depending only on the body forces and some universal constants such that for each $q_0 \in L^{\fr{1}{\delta}}$, there exists a time $T_0$ depending only on $\|q_0\|_{L^{\fr{1}{\delta}}}$, the body forces, and universal constants such that 
\be \la{absball1}
\mathcal{S}_{\alpha}(t) q_0 \in \mathcal{B}_{\rho} := \left\{q \in \mathcal{D}(\l^s): \|\l^s q\|_{L^2} \le \rho \right\}
\ee for all $t \ge T_0$. In particular, $\mathcal B_{\rho}$ is compact in $H^1$ and there is a time $T$ depending only on $f$ such that $\mathcal{S}_{\alpha}(t)\mathcal B_{\rho} \subset \mathcal B_{\rho}$ for all times $t\ge T$.
\end{prop}

\begin{proof} The proof is divided into several steps.

{\bf{Step 1. Evolution in $L^{\fr{1}{\delta}}$. }} The norm $\|q\|_{L^{\fr{1}{\delta}}}$ obeys the energy inequality
\be 
\fr{d}{dt} \|q\|_{L^{\fr{1}{\delta}}} +  c\|q\|_{L^{\fr{1}{\delta}}}
\le C\|f\|_{L^{\fr{1}{\delta}}},
\ee where $c$ is some constant depends on $\delta$. Here Theorem~\ref{thm:poin} is exploited.  By the uniform Gronwall Lemma \ref{uniformgronwall}, we deduce the existence of a radius $R_1$ depending only on $\|f\|_{L^{\fr{1}{\delta}}}$ and a time $t_1$ depending only $\|q_0\|_{L^{\fr{1}{\delta}}}$ such that the solution $q$ satisfies the uniform $L^{\fr{1}{\delta}}$ bound
\be \label{bound:1}
\|q(t)\|_{L^{\fr{1}{\delta}}} \le R_1
\ee for all $t \ge t_1$. 

{\bf{Step 2. Evolution in $L^{\infty}$.}} The $L^2 $ evolution of $q$, described by
\be 
\fr{1}{2} \fr{d}{dt} \|q\|_{L^2}^2 + \|\l^{\fr{\alpha}{2}}q \|_{L^2}^2 = \int_{\Omega} fq dx,
\ee boils down to 
\be \la{abs11}
\fr{d}{dt} \|q\|_{L^2}^2 + \|\l^{\fr{\alpha}{2}}q\|_{L^2}^2 \le \|\l^{-\fr{\alpha}{2}}f\|_{L^2}^2
\ee after applying Young's inequality. Bounding the dissipation from below using the embedding $\mathcal D(\l^{\frac\alpha2}) \subset L^2$, we obtain the decaying-in-time bound
\be 
\|q(t)\|_{L^2}^2 \le \|q_0\|_{L^2}^2 e^{-ct} + \|\l^{-\fr{\alpha}{2}}f\|_{L^2}^2
\ee for all $t \ge 0$, from which we deduce the existence of a time $t_2 \ge t_1$ depending only on $\|q_0\|_{L^2}$ such that 
\be 
\|q(t)\|_{L^2}^2 \le 1 + \|\l^{-\fr{\alpha}{2}}f\|_{L^2}^2
\ee for all $t \ge t_2$. Moreover, integrating \eqref{abs11} in time from $t$ to $t+1$ yields the local-in-time integrability estimates
\be \la{abs12}
\int_{t}^{t+1} \|\l^{\fr{\alpha}{2}}q(s)\|_{L^2}^2 ds \le 1+ 2\|\l^{-\fr{\alpha}{2}} f\|_{L^2}^2
\ee  for all times $t \ge t_2$. In particular, there exists a time $t_3 \ge t_2$ such that $\l^{\fr{\alpha}{2}}q (t_3)$ is square integrable. Now we address the time evolution of $\l^{\fr{\alpha}{2}}q$ in $L^2$ starting at time $t_3$.

The energy equality
\be 
\fr{1}{2}\fr{d}{dt} \|\l^{\fr{\alpha}{2}} q\|_{L^2}^2
+ \|\l^{\alpha} q\|_{L^2}^2 = \int_{\Omega} f \l^{\alpha} q dx - \int_{\Omega} u \cdot \na q \l^{\alpha} q dx
\ee holds and reduces to 
\be 
\fr{d}{dt} \|\l^{\fr{\alpha}{2}} q\|_{L^2}^2 + \fr{3}{2}\|\l^{\alpha} q\|_{L^2}^2 \le C\|f\|_{L^2}^2 + C\|u\|_{L^{\fr{1}{\delta}}} \|\l q\|_{L^{\frac{2}{1-2\delta}}} \|\l^{\alpha}q\|_{L^{2}}
\ee in view of H\"older and Young inequalities. By making use of the continuous embedding of $\mathcal{D}(\l^{ 2\delta})$ into $L^{\fr{2}{1-2\delta}}$ and the Brezis-Mironescu interpolation inequality, we estimate
\be 
\|\l q\|_{L^{\fr{2}{1-2\delta}}} \le C\|\l^{1 + 2\delta} q \|_{L^2} 
\le C\|\l^{\alpha} q\|_{L^2}^{\beta} \|\l^{\fr{\alpha}{2}}q\|_{L^2}^{1-\beta}
\ee for some $\beta \in (0,1)$, provided that $\fr{\alpha}{2} < 1 + 2\delta < \alpha$, which is equivalent to $\delta < \frac{\alpha-1}{2}$.  Thus, we obtain 
\be 
\fr{d}{dt} \|\l^{\fr{\alpha}{2}} q\|_{L^2}^2 + \|\l^{\alpha} q\|_{L^2}^2 \le C\| f\|_{L^2}^2 + C\|q\|_{L^{\fr{1}{\delta}}}^{\fr{2}{1-\beta}}  \|\l^{\fr{\alpha}{2}}q\|_{L^2}^2
\ee due to the boundedness of the Riesz transform on $L^{\fr{1}{\delta}}$ and Young's inequality. In view of the bound \eqref{bound:1} and local-in-time estimate \eqref{abs12}, we infer that the conditions of the Gronwall Lemma \ref{uniformgronwall} are satisfied in both cases (that is for any $\alpha \in (1,2)$). Consequently, there exists a time $t_4 \ge t_3$ and a radius $R_2$ depending only on $\|f\|_{L^{\fr{1}{\delta}}}$  such that 
\be 
\|\l^{\fr{\alpha}{2}} q(t)\|_{L^2} \le R_2
\ee for all $t \ge t_4$. Moreover, there is a radius $R_3$ depending also on $\|f\|_{L^{\fr{1}{\delta}}}$  such that 
\be \label{77}
\int_{t}^{t+1} \|\l^{\alpha} q\|_{L^2}^2 \le R_3
\ee for all $t \ge t_4$. In particular, there is a time $t_5$ at which $\l^{\alpha}q$ becomes square integrable. As $\mathcal{D}(\l^{\alpha})$ is continuously embedded in $L^{\infty}$, the solution $q$ is $L^{\infty}$ regular at time $t_5$.  
Since the $L^p$ norm of $q$ obeys \be  \fr{d}{dt} \|q\|_{L^p} + c\|q\|_{L^p} \le C\|f\|_{L^p} \ee for some positive constants $c, C$ being independent of $p$, we deduce that  
\be 
\|q(t)\|_{L^p} \le \|q(t_5)\|_{L^p} e^{-c(t-t_5)} + \frac{C}c\|f\|_{L^p}
\ee for all times $t \ge t_5$. Letting $p \rightarrow \infty$, it follows that 
\be  
\|q(t)\|_{L^{\infty}} \le \|q(t_5)\|_{L^{\infty}} e^{-c(t-t_5)} + \frac{C}c\|f\|_{L^{\infty}}
\ee for all $t \ge t_5$. Therefore, there exists a time $t_6 > t_5$ such that  
\be  \|q(t)\|_{L^{\infty}} \le 1 + \frac{C}c\|f\|_{L^{\infty}} \ee for all $t \ge t_6$.

{\bf{Step 3. Evolution in $\mathcal{D}(\l)$.}} From \eqref{77}, we infer the existence of a time $t_7 \ge t_6$ such that $\l q (t_7) \in L^2(\Omega)$.  We study the evolution of $\|\l q\|_{L^2}^2$ starting at time $t_7$. Indeed, we have 
\be 
\fr{1}{2} \fr{d}{dt} \|\l q\|_{L^2}^2
+ \|\l^{1+\fr{\alpha}{2}} q\|_{L^2}^2 
= \int_{\Omega} \l^{1-\fr{\alpha}{2}}f \l^{1+\fr{\alpha}{2}} q dx
+ \int_{\Omega} u \cdot \na q \Delta q dx. 
\ee Integrating by parts and applying the Brezis-Mironescu interpolation inequality, we estimate 
\be 
\beg{aligned}
\left| \int_{\Omega} u \cdot \na q \Delta q dx\right|
&\le C\int_{\Omega} |\na u| |\na q| |\na q| dx
\le C\|q\|_{W^{1,3}}^3
\le C\|q\|_{L^{\infty}} \|\l^{\fr{3}{2}}q\|_{L^2}^2 
\\&\le C\|q\|_{L^{\infty}} \|\l q\|_{L^2}^{\frac{2(\alpha - 1)}{\alpha}}\|\l^{1+\fr{\alpha}{2}}q\|_{L^2}^{\frac{2}{\alpha}}
\\&\le C\|q\|_{L^{\infty}} \|\l^{\alpha} q\|_{L^2}^{\frac{2(\alpha - 1)}{\alpha}}\|\l^{1+\fr{\alpha}{2}}q\|_{L^2}^{\frac{2}{\alpha}},
\end{aligned} 
\ee 
where the boundary temrs disappear since $u\cdot n = 0$ on the boundary.
By Young's inequality, the above gives
\be 
\left| \int_{\Omega} u \cdot \na q \Delta q dx\right|
\le \fr{1}{4} \|\l^{1+\fr{\alpha}{2}}q\|_{L^2}^2 + C \|q\|_{L^{\infty}}^{\frac{\alpha}{\alpha - 1}} \|\l^{\alpha}q\|_{L^2}^2 
\ee Therefore, we infer that 
\be 
\fr{d}{dt} \|\l q\|_{L^2}^2 + \|\l^{1+\fr{\alpha}{2}}q\|_{L^2}^2 
\le C\|q\|_{L^{\infty}}^{\fr{\alpha}{\alpha -1}}\|\l^{\alpha}q\|_{L^2}^2 + C\|\l^{1-\fr{\alpha}{2}}f\|_{L^2}^2.
\ee As a consequence of Step 2, the conditions of the uniform Gronwall Lemma \ref{uniformgronwall} hold for this latter differential inequality. Thus, there exists a time $t_8 \ge t_7$  depending only on the size of the initial data and the forcing term $f$ and a radius $R_4$ depending only on the forces $f$ such that 
\be 
\|\l q(t)\|_{L^2}^2 \le R_4
\ee and 
\be 
\int_{t}^{t+1} \|\l^{1+\fr{\alpha}{2}}q(s)\|_{L^2}^2 ds
\le R_4
\ee for all $t \ge t_8$. In particular, there exists a time $t_9 \ge t_8$ such that $\l^{1+\fr{\alpha}{2}}q(t_9) \in L^2$.

{\bf{Step 4. Evolution in $\mathcal{D}(\l^s)$.}} The $L^2$ norm of $\l^sq$ evolves according to 
\be 
\fr{1}{2} \fr{d}{dt} \|\l^s q\|_{L^2}^2 + \|\l^{s+\fr{\alpha}{2}}q\|_{L^2}^2
= - \int_{\Omega} u \cdot \na q \l^{2s} q dx + \int_{\Omega} \l^{s-\fr{\alpha}{2}}f \l^{s+\fr{\alpha}{2}} q dx,
\ee which, by the Cauchy-Schwarz inequality and the divergence-free condition obeyed by $u$, gives rise to 
\be 
\fr{1}{2} \fr{d}{dt} \|\l^s q\|_{L^2}^2 + \|\l^{s+\fr{\alpha}{2}}q\|_{L^2}^2
\le \|\l^{s-\fr{\alpha}{2}} (u \cdot \na q)\|_{L^2} \|\l^{s+\fr{\alpha}{2}}q\|_{L^2}
+ \|\l^{s-\fr{\alpha}{2}} f\|_{L^2} \|\l^{s+\fr{\alpha}{2}} q\|_{L^2}.
\ee 
In view of the continuous embedding of $H^{1+\frac{\alpha}{2}}$ into $C^{0, \frac{\alpha}{2}}$, the inequality  $\frac{\alpha}{2} > s - \frac{\alpha}{2}$, and the fact that $\na q \in \mathcal{D}(\l^{s-\frac{\alpha}{2}})$ when $s - \frac{\alpha}{2} < \frac{1}{2}$, the product estimate \eqref{fractionalproductt} applies and yields
\be 
\|\l^{s-\fr{\alpha}{2}} (u \cdot \na q)\|_{L^2}
\le C\|u\|_{L^{\infty}} \|\l^{s - \fr{\alpha}{2}} \na q\|_{L^2} + C\|\na q\|_{L^{2}} \|u\|_{H^{1+\fr{\alpha}{2}}}.
\ee Using the continuous Sobolev embeddings of $H^s$ into $L^{\infty}$, the boundedness of the Riesz transform from $\mathcal{D}(\l^s)$ into $H^s$, and the Brezis-Mironescu interpolation inequality, we estimate 
\be 
\|u\|_{L^{\infty}} 
= \|R^{\perp} q\|_{L^{\infty}}
\le C\|R^{\perp} q\|_{H^s}
\le C\|\l^s q\|_{L^2} \le C\|\l q\|_{L^2}^{\fr{2+\alpha -2s}{\alpha}} \|\l^{1+\fr{\alpha}{2}} q\|_{L^2}^{\fr{2(s-1)}{\alpha}}.
\ee Consequently, we obtain
\be 
\|\l^{s-\fr{\alpha}{2}} (u \cdot \na q)\|_{L^2}
\le C\|\l q\|_{L^2}^{\fr{2+\alpha -2s}{\alpha}} \|\l^{1+\fr{\alpha}{2}} q\|_{L^2}^{\fr{2(s-1)}{\alpha}}\|\l^{1+s - \fr{\alpha}{2}} q\|_{L^2} + C\|\l q\|_{L^2} \|\l^{1+\frac{\alpha}{2}}q\|_{L^2},
\ee 
where we have used the continuous embedding $\mathcal D(\l^\gamma) \subset H^\gamma$ that holds for all $\gamma\geq 0$.
Another application of the Brezis-Mironescu interpolation inequality gives
\be 
\|\l^{1+s - \fr{\alpha}{2}} q\|_{L^2}
\le C\|\l^s{q}\|_{L^2}^{\fr{2(\alpha - 1)}{\alpha}} \|\l^{s+\fr{\alpha}{2}}q\|_{L^2}^{\fr{2-\alpha}{\alpha}},
\ee so that 
\be 
\beg{aligned}
\|\l^{s-\fr{\alpha}{2}} (u\cdot \na q)\|_{L^2} \|\l^{s+\fr{\alpha}{2}}q\|_{L^2}
&\le C\|\l q\|_{L^2}^{\fr{2+\alpha -2s}{\alpha}} \|\l^{1+\fr{\alpha}{2}} q\|_{L^2}^{\fr{2(s-1)}{\alpha}} \|\l^s q\|_{L^2}^{\fr{2(\alpha - 1)}{\alpha}} \|\l^{s+\fr{\alpha}{2}}q\|_{L^2}^{\fr{2}{\alpha}}
\\&\quad\quad+ C\|\l q\|_{L^2} \|\l^{1+\frac{\alpha}{2}}q\|_{L^2}\|\l^{s+\frac{\alpha}{2}} q\|_{L^2}.
\end{aligned}
\ee  
By Young's inequality, we infer that 
\be 
\beg{aligned}
&\|\l^{s-\fr{\alpha}{2}} (u \cdot \na q)\|_{L^2} \| \l^{s+\fr{\alpha}{2}}q\|_{L^2}
\\
\le &\fr{1}{4}\|\l^{s+\fr{\alpha}{2}} q\|_{L^2}^2
+ C\|\l q\|_{L^2}^{\fr{2+\alpha - 2s}{\alpha - 1}} \|\l^{1+\fr{\alpha}{2}} q\|_{L^2}^{\fr{2(s-1)}{\alpha - 1}} \|\l^s q\|_{L^2}^2 + C\|\l q\|_{L^2}^2 \|\l^{1+\frac\alpha2} q\|_{L^2}^2
\\\le &\fr{1}{4}\|\l^{s+\fr{\alpha}{2}} q\|_{L^2}^2
+ C\left(\|\l q\|_{L^2}^{\fr{2+\alpha - 2s}{\alpha - s}} +  \|\l^{1+\fr{\alpha}{2}} q\|_{L^2}^{2} \right)\|\l^s q\|_{L^2}^2.
\end{aligned}
\ee Therefore, we end up with the differential inequality 
\be 
\fr{d}{dt} \|\l^s q\|_{L^2}^2
+ \|\l^{s+\fr{\alpha}{2}}q\|_{L^2}^2
\le C\|\l^{s-\fr{\alpha}{2}}f\|_{L^2}^2 +  C\left(\|\l q\|_{L^2}^{\fr{2+\alpha - 2s}{\alpha - s}} +  \|\l^{1+\fr{\alpha}{2}} q\|_{L^2}^{2} \right)\|\l^s q\|_{L^2}^2.
\ee As $s <\frac{\alpha+1}2 \le 1 + \fr{\alpha}{2}$, we have 
\be 
\int_{t}^{t+1} \|\l^s q(\tau)\|_{L^2}^2 d\tau \le R_4
\ee for any $t \ge t_9$ as a consequence of Step 3. Therefore,  the uniform Gronwall Lemma \ref{uniformgronwall} implies the existence of a time $t_{10} \ge t_9$ depending only on $\|q_0\|_{L^{\fr{1}{\delta}}}$ and the forcing term, and a radius $R_5$ depending only on $\|\l^{s-\fr{\alpha}{2}}f\|_{L^2}$ such that 
\be 
\|\l^{s} q(t)\|_{L^2}^2 \le R_5
\ee for all times $t \ge t_{10}$.
    
\end{proof}

{
\begin{rem}
    The condition $s<\frac{\alpha+1}2$ in Proposition~\ref{absball} is imposed in order to have the equivalence between $\mathcal D(\l^{s-\frac\alpha2})$ and $H^{s-\frac\alpha2}$ when $s-\frac\alpha2<\frac12$ as $\nabla q$ does not vanish on the boundary. Such an assumption can be relaxed to $s< 1+ \frac\alpha2$ and $s\neq \frac{\alpha+1}2$ by using part (1) of Theorem~\ref{thm:product-1} instead of part (3). The proof involves more tedious calculations and interpolation. For the sake of simplicity,  we make a stronger assumption on $s$ and use part (3) of Theorem~\ref{thm:product-1}.
\end{rem}
}

Next, we study the instantaneous continuity of the solution map $\mathcal{S}_{\alpha}(t)$ in $H^1$.

\beg{prop} Let $t > 0$ be a positive time. If $q_1^0, q_2^0 \in \mathcal{D}(\l)$, then 
\be\la{lip}
\|\l \mathcal{S}_{\alpha}(t)q_1^0 - \l \mathcal{S}_{\alpha}(t) q_2^0\|_{L^2}^2 \le K(t) \|\l q_1^0 - \l q_2^0\|_{L^2}^2,
\ee where 
\be 
K(t) = \exp \left\{C\int_{0}^{t} \left(\|\l^{2-\fr{\alpha}{2}}q_1(s)\|_{L^2}^2 + \|\l^{2-\fr{\alpha}{2}}q_2(s)\|_{L^2}^2 \right) ds \right\}.
\ee 
\end{prop}

\begin{proof}
     Let $q_1(t) = \mathcal{S}_{\alpha}(t)q_1^0$ and $q_2(t) = \mathcal{S}_{\alpha}(t)q_2^0 $. The difference $q(t) = q_1 -q_2$ obeys
    \be 
\fr{1}{2} \frac{d}{dt} \|\l q\|_{L^2}^2 + \|\l^{1+ \frac{\alpha}{2}}q\|_{L^2}^2
=  \int_{\Omega} u_1 \cdot \na q \Delta q dx + \int_{\Omega}  u \cdot \na q_2 \Delta q dx,
    \ee 
where $u=R^\perp q$. 
We estimate 
\be 
\beg{aligned}
\left|\int_{\Omega} u_1 \cdot \na q \Delta q dx  \right|
&\le C\int_{\Omega} |\na u_1| |\na q|^2 dx \le C\|q\|_{H^1} \|q\|_{W^{1,\frac{4}{2-\alpha}}} \|R^\perp q_1\|_{W^{1,\frac{4}{\alpha}}}
\\
&\leq C\|\l q\|_{L^2} \|q\|_{H^{1+\frac{\alpha}{2}}} \|R^{\perp} q_1\|_{H^{2-\frac{\alpha}{2}}}
\le C\|\l q\|_{L^2} \|\l^{1+\frac{\alpha}{2}}q\|_{L^2} \|\l^{2-\frac{\alpha}{2}} q_1\|_{L^2}
\end{aligned}
\ee and 
\be 
\beg{aligned}
\left|\int_{\Omega}  u \cdot \na q_2 \Delta q dx \right| 
&\le C\int_{\Omega} |\na u| |\na q_2| |\na q| dx \le C\|R^\perp q\|_{H^1} \|q\|_{W^{1,\frac{4}{2-\alpha}}} \| q_2\|_{W^{1,\frac{4}{\alpha}}}
\\&
\leq C \|\l q\|_{L^2} \|q\|_{H^{1+\frac{\alpha}{2}}} \|q_2\|_{H^{2-\frac{\alpha}{2}}}
\le C\|\l q\|_{L^2} \|\l^{1+\frac{\alpha}{2}}q\|_{L^2} \|\l^{2-\frac{\alpha}{2}} q_2\|_{L^2}
\end{aligned}
\ee   using the continuous embeddings of $H^{1+\fr{\alpha}{2}}$ and $H^{2-\fr{\alpha}{2}}$ into $W^{1,\frac{4}{2-\alpha}}$ and $W^{1,\frac{4}{\alpha}}$ respectively, and the boundedness of the Dirichlet Riesz transform from $\mathcal{D}(\l^{2-\frac{\alpha}{2}})$ into $H^{2 - \frac{\alpha}{2}}$ and $\mathcal{D}(\l)$ into $H^1$. 
An application of Young's inequality gives rise to the differential inequality
\be 
\fr{d}{dt} \|\l q\|_{L^2}^2 
\le C\left(\|\l^{2 - \frac{\alpha}{2}}q_1\|_{L^2}^2 + \|\l^{2 - \frac{\alpha}{2}} q_2\|_{L^2}^2 \right) \|\l q\|_{L^2}^2.
\ee By Gronwall's inequality, we obtain the desired Lipschitz continuity estimate \eqref{lip}. 
 
\end{proof}

Finally, we address the injectivity of the Solution map:

\beg{prop} Let $q_1^0, q_2^0 \in \mathcal{D}(\l)$. Suppose there is a time $T>0$ such that $S_{\alpha}(T)q_1^0 = S_{\alpha}(T)q_2^0$. Then $q_1^0 = q_2^0$. 
\end{prop}

\beg{proof} We adapt the proof of \cite{constantin1988navier} to the current system. Denote $S_{\alpha}(t)q_1^0$ and $S_{\alpha}(t)q_2^0$ by $q_1(t)$ and $q_2(t)$ respectively. 

{\bf{Step 1. Time analyticity of solutions.}} We complexify the functional spaces and operators, and fix an angle $\theta \in (-\frac{\pi}{2}, \frac{\pi}{2})$. We denote by $t$ the complex number $se^{i\theta}$ where $s > 0$. By the Chain Rule, we have 
\be 
\beg{aligned}
\frac{d}{ds} \|\na q(se^{i\theta})\|_{L^2}^2
&= \frac{d}{ds} (q(se^{i\theta}), -\Delta q (se^{i\theta}))_{L^2}
\\&= \left(e^{i\theta} \frac{dq}{dt} (se^{i\theta}), -\Delta q (se^{i\theta}) \right)_{L^2} + \left(q(se^{i\theta}), -e^{i\theta} \Delta \frac{dq}{dt} (se^{i\theta}) \right)_{L^2}
\\&= 2 Re \left(e^{i\theta} \left(\frac{dq}{dt} (se^{i\theta}), -\Delta q(se^{i\theta}) \right)_{L^2} \right),
\end{aligned}
\ee where $Re(z)$ is the real part of the complex number $z$. Consequently, it holds that
\be 
\frac{1}{2} \frac{d}{ds} \|\na q (se^{i\theta})\|_{L^2}^2
+ \cos \theta \|\l^{1+\frac{\alpha}{2}}q(se^{i\theta})\|_{L^2}^2
= Re \left(e^{i\theta} (-u \cdot \na q, -\Delta q)_{L^2} + e^{i\theta} (f, -\Delta q)_{L^2} \right).
\ee By integration by parts, we estimate the nonlinear term as 
\be 
|(- u \cdot \na q, -\Delta q)_{L^2}|
\le C\|\na u\|_{L^{\frac{4}{\alpha}}} \|\na q\|_{L^2} \|\na q\|_{L^{\frac{4}{2-\alpha}}}
\le C\|u\|_{H^{2-\frac{\alpha}{2}}} \|\l q\|_{L^2} \|\l^{1+\frac{\alpha}{2}} q\|_{L^2},
\ee where we have used the continuous Sobolev embeddings of $H^{2-\frac{\alpha}{2}}$ in $W^{1, \frac{4}{\alpha}}$ and $H^{1+\frac{\alpha}{2}}$ in $W^{1, \frac{4}{2-\alpha}}$. Since $1 < 2 - \frac{\alpha}{2} < 1 + \frac{\alpha}{2}$ and the Riesz transform is bounded from $\mathcal{D}(\l^{\gamma})$ into $H^{\gamma}$ for any $\gamma \ge 0$, we have 
\be 
\|u\|_{H^{2-\frac{\alpha}{2}}} \leq C \|\l^{2-\frac{\alpha}{2}} q\|_{L^2} \le C\|\l q\|_{L^2}^{1-\beta} \|\l^{1+\frac{\alpha}{2}}q\|_{L^2}^{\beta}
\ee for some $\beta \in (0,1)$. Thus, we obtain 
\be 
|(- u \cdot \na,q, -\Delta q)_{L^2}| \le C\|\l q\|_{L^2}^{2-\beta}\|\l^{1+\frac{\alpha}{2}}q\|_{L^2}^{1+\beta}
\le \frac{\cos \theta}{4} \|\l^{1+\frac{\alpha}{2}}q\|_{L^2}^2 + \frac{C}{(\cos \theta)^{\frac{1+\beta}{1-\beta}}} \|\l q\|_{L^2}^{\frac{2(2-\beta)}{1-\beta}},
\ee yielding the energy evolution 
\be 
\frac{d}{ds} \|\na q(se^{i\theta})\|_{L^2}^2 
+ \cos \theta \|\l^{1+\frac{\alpha}{2}} q(se^{i\theta})\|_{L^2}^2
\le \frac{C}{\cos \theta} \|\l^{1-\frac{\alpha}{2}}f\|_{L^2}^2 + \frac{C}{(\cos \theta)^{\frac{1+\beta}{1-\beta}}} \|\na q\|_{L^2}^{\frac{2(2-\beta)}{1-\beta}}.
\ee 
This gives rise to the bound 
\be 
\|\na q({se^{i\theta}})\|_{L^2}^2 
\le 2 \left(\|\na q_0\|_{L^2}^2 + 1 \right)
\ee provided that 
\be 
s \left(\frac{C}{\cos \theta} \|\l^{1-\frac{\alpha}{2}}f\|_{L^2}^2 + \frac{C}{(\cos \theta)^{\frac{1+\beta}{1-\beta}}}  \right) \le \Gamma_0
\ee where $\Gamma_0$ is a constant depending only on $\|\l q_0\|_{L^2}$. Therefore, $q$ is analytic on the region 
\be 
\mathcal{R} = \left\{t = s e^{i\theta} : s \left(\frac{C}{\cos \theta} \|\l^{1-\frac{\alpha}{2}}f\|_{L^2}^2 + \frac{C}{(\cos \theta)^{\frac{1+\beta}{1-\beta}}}  \right) \le \Gamma_0  \right\}.
\ee Due to the uniform boundedness of $q$ in $\mathcal{D}(\l)$, the analyticity of $q$ extends globally. 

{\bf{Step 2. Backward uniqueness.}} Since $q_1(T) = q_2(T)$, we have $q_1(t) = q_2(t)$ for all $t \ge T$ by the uniqueness of solutions. The time analyticity obtained in Step 1 yields $q_1(t) = q_2(t)$ for all $t \ge 0$. Therefore, $q_1^0 = q_2^0$. 

\end{proof}

\subsection{Existence of a Finite-Dimensional Global Attractor} As a consequence of the existence of a compact connected absorbing ball, and the continuity and injectivity of the solution map, we obtain the existence of a global attractor:

\beg{thm}\label{thm:sqg-3}
Let $\alpha \in (1, 2)$. There exist a time $T > 0$ depending only on the body forces $f$ and the power $\alpha$ such that the ball  $\mathcal{S}_{\alpha}(t) \mathcal{B}_\rho \subset  \mathcal{B}_\rho$ for all $t \ge T$, {where $\mathcal B_\rho$ is defined in \eqref{absball1}}. Moreover, the set
 \begin{equation} \la{X}
X = \bigcap_{t > 0} S_{\alpha}(t) \mathcal{B}_{\rho}
\end{equation}
satisfies the following properties: 
\begin{enumerate}
\item[(a)] $X$ is compact in $\mathcal{D}(\l)$.
\item[(b)] $S_{\alpha}(t)X = X$ for all $t \geq 0$.
\item[(c)] If $Z$ is bounded in $\mathcal{D}(\l)$  in the norm of $\mathcal{D}(\l)$, and $S_{\alpha}(t)Z = Z$ for all $t \geq 0$, then $Z \subset X$. 
\item[(d)] For every $w_0 \in \mathcal{D}(\l),$
$\lim\limits_{t \to \infty} dist_{\mathcal{D}(\l)} (S_{\alpha}(t)w_0, X) = 0$.
\item[(e)] $X$ is connected.
\end{enumerate}
\end{thm}

The proof is standard and follows from \cite{constantin1988navier}. We omit the details.

Now we study the dimensionality of the attractor. 
Fix $\alpha \in (1,2)$. For $N \ge 1$, we consider a smooth function $\Phi: \Omega \subset \RR^N \rightarrow \mathcal{D}(\l)$ and let $\Sigma_t$ be the image of $\Phi(\Omega)$ under the solution map $\mathcal{S}_{\alpha}(t)$ at time $t$. Denoting the volume element in $\RR^n$ by $d\omega_1 \dots d\omega_N$, the volume element in $\Sigma_t$ is given by 
\be 
\left|\frac{\partial}{\partial \omega_1} \mathcal{S}_{\alpha}(t) \Phi(\omega) \wedge \dots \wedge \frac{\partial}{\partial \omega_N} \mathcal{S}_{\alpha}(t) \Phi(\omega) \right| d\omega_1 \dots d\omega_N,
\ee where $\omega = (\omega_1, \dots, \omega_N) \in \RR^N$. The functions 
\be 
\xi_i = \frac{\partial}{\partial \omega_i} \mathcal{S}_{\alpha}(t) \Phi(\omega), \hspace{1cm} i = 1, \dots, N,
\ee solve 
\be 
\pa_t \xi_i + \l^{\alpha} \xi_i + R^{\perp} \bar{q} \cdot \na \xi_i + R^{\perp} \xi_i \cdot \na \bar{q} = 0
\ee along $\bar{q}(t) = \mathcal{S}_{\alpha}(t) \Phi(\omega)$. Let $\bar{q}_0 \in X$. We define the instantaneous volume
\be 
V_N(t) = \left\|\xi_1 \wedge \dots \wedge \xi_N \right\|_{\bigwedge^N \mathcal{D}(\l)}
\ee where $\xi_1, \dots, \xi_N$ solve along $\bar{q}(t) = \mathcal{S}_{\alpha}(t) \bar{q}_0$ and $\bigwedge^N \mathcal{D}(\l)$ is the $N$-th exterior product of $\mathcal{D}(\l)$.  

\beg{prop}
There is an integer $N_0$ depending only on $f$, and a positive constant $c$ depending only on $\alpha$ such that 
\be 
V_N(t) \le V_N(0) e^{-cN^{1+\frac{\alpha}{2}} t}
\ee for any $t \ge 0$ and $N \ge N_0$. 
\end{prop}

\beg{proof}
We denote by $I$ the identity operator and  consider the operators $\mathcal{A}_{\bar{q}_0}$ and $(\mathcal{A}_{\bar{q}_0})_{N}$ defined by 
\be 
\mathcal{A}_{\bar{q}_0}[\xi]
= \l^{\alpha} \xi + R^{\perp} \bar{q} \cdot \na \xi + R^{\perp} \xi \cdot \na \bar{q}
\ee and 
\be 
(\mathcal{A}_{\bar{q}_0})_{N} = \mathcal{A}_{\bar{q}_0} \wedge I \wedge \dots \wedge I + \dots + I \wedge \dots \wedge I \wedge \mathcal{A}_{\bar{q}_0}
\ee respectively. The wedge product $\xi_1 \wedge \dots \wedge \xi_N$ evolves in time according to 
\be 
\pa_t (\xi_1 \wedge \dots \wedge \xi_N) + (\mathcal{A}_{\bar{q}_0})_N (\xi_1 \wedge \dots \wedge \xi_N) = 0,
\ee which gives rise to the volume evolution equation 
\be 
\frac{d}{dt} V_N + Trace(\mathcal{A}_{\bar{q}_0}Q_N) V_N = 0,
\ee where $Q_N$ is the orthogonal projection  in $\mathcal{D}(\l)$ onto the space spanned by $\xi_1, \dots, \xi_N$. An application of Gronwall's inequality yields
\be 
V_N(t) \le V_N(0) \exp \left\{-\int_{0}^{t} Trace(\mathcal{A}_{\bar{q}_0} Q_N) ds \right\}
\ee for any $t \ge 0$. For each $t \ge 0$, we let $\left\{\phi_1, \dots, \phi_N \right\}$ be an orthonormal set spanning the linear span of $\left\{\xi_1, \dots, \xi_N \right\}$. Then 
\be 
\beg{aligned}
Trace(\mathcal{A}_{\bar{q}_0} Q_N) = \sum\limits_{i=1}^{N} (\l^{\alpha} \phi_i + R^{\perp} \bar{q} \cdot \na \phi_i + R^{\perp} \phi_i \cdot \na \bar{q}, -\Delta \phi_i)_{L^2}.
\end{aligned}
\ee Let $\mu_1, \dots, \mu_N$ be the first $N$ eigenvalues of $\l^{1+ \frac{\alpha}{2}}$. We have 
\be 
\sum\limits_{i=1}^{N} (\l^{\alpha} \phi_i, -\Delta \phi_i)_{L^2}  \ge \mu_1 + \dots + \mu_N \ge CN^{1+\frac{\alpha}{2}}.
\ee In view of the divergence-free condition obeyed by $R^{\perp} \bar{q}$, standard continuous Sobolev embeddings, and the boundedness of the Dirichlet Riesz transform from $\mathcal{D}(\l^s)$ into $H^s$, we  estimate
\be 
\beg{aligned}
\sum\limits_{i=1}^{N} |(R^{\perp} \bar{q} \cdot \na \phi_i, -\Delta \phi_i)_{L^2}| 
&\le C \sum\limits_{i=1}^{N} \|\na R^{\perp} \bar{q}\|_{L^{\fr{4}{\alpha}}} \|\na \phi_i\|_{L^2}\|\na \phi_i\|_{L^{\frac{4}{2-\alpha}}}
\\&\le C\|R^{\perp} \bar{q}\|_{H^{2-\frac{\alpha}{2}}} \|\na \phi_i\|_{L^2} \|\phi_i\|_{H^{1+\frac{\alpha}{2}}}
\\&\le C\|\l^{2-\frac{\alpha}{2}}  \bar{q}\|_{L^2} \|\na \phi_i\|_{L^2} \|\phi_i\|_{H^{1+\frac{\alpha}{2}}}.
\end{aligned}
\ee 
Since $\|\l \phi_i\|_{L^2} = 1$ and $2-\frac{\alpha}{2} < 1+\frac{\alpha}{2}$, this latter inequality reduces to 
\be 
\sum\limits_{i=1}^{N} |(R^{\perp} \bar{q} \cdot \na \phi_i, -\Delta \phi_i)_{L^2}| 
\le C\|\l^{1+\frac{\alpha}{2}} \bar{q}\|_{L^2} \|\l^{1+\frac{\alpha}{2}}\phi_i\|_{L^2}.
\ee By Theorem~\ref{thm:nonlinear}, we estimate 
\be 
\beg{aligned}
\sum\limits_{i=1}^{N} |(R^{\perp} \phi_i \cdot \na \bar{q}, -\Delta \phi_i)_{L^2}| 
&= \sum\limits_{i=1}^{N} |(\l^{1-\frac{\alpha}{2}}(R^{\perp} \phi_i \cdot \na \bar{q}), \l^{1+\frac{\alpha}{2}} \phi_i)_{L^2}|
\\&\le C\|R^{\perp} \phi_i\|_{H^1} \|\l^{1+\frac{\alpha}{2}} \phi_i\|_{L^2} \|\l^{1+\frac{\alpha}{2}} \bar{q}\|_{L^2}
\\&\le C\|\l \phi_i\|_{L^2} \|\l^{1+\frac{\alpha}{2}} \phi_i\|_{L^2} \|\l^{1+\frac{\alpha}{2}} \bar{q}\|_{L^2}
\\&\le C\|\l^{1+\frac{\alpha}{2}} \phi_i\|_{L^2} \|\l^{1+\frac{\alpha}{2}} \bar{q}\|_{L^2}.
\end{aligned}
\ee As $\bar{q}_0 \in X \subset \mathcal{B}_{\rho}$, we have
\be 
\int_{0}^{t} \|\l^{1+\frac{\alpha}{2}}\bar{q}\|_{L^2}^2 ds \le R_f t
\ee 
where $R_f$ is a constant depending only on $f$. Consequently, we infer that 
\be 
\beg{aligned}
&\int_{0}^{t} Trace(\mathcal{A}_{\bar{q}_0} Q_N)) ds 
\ge \int_{0}^{t} Trace(\l^{\alpha} Q_N) ds + \int_{0}^{t} (R^{\perp} \bar{q} \cdot \na \phi_i + R^{\perp} \phi_i \cdot \na \bar{q}, -\Delta \phi_i)_{L^2} ds
\\&\quad\quad\ge  \int_{0}^{t} Trace(\l^{\alpha} Q_N) ds - \frac{1}{2} \int_{0}^{t} Trace(\l^{\alpha} Q_N) ds  - CN \int_{0}^{t} \|\l^{1+\frac{\alpha}{2}}\bar{q}\|_{L^2}^2 ds 
\\&\quad\quad\ge \frac{1}{2} \int_{0}^{t} Trace(\l^{\alpha} Q_N) ds - CNR_ft
\\&\quad\quad\ge CN^{1+\frac{\alpha}{2}}t - CNR_ft
= CNt \left(N^{\frac{\alpha}{2}} - {R_f} \right)
\ge CNt
\end{aligned}
\ee provided that $N^{\frac{\alpha}{2}} \ge R_f + 1$.   
\end{proof}

As a consequence of the decay of volume elements, and following \cite{constantin1988navier}, one obtains:

\beg{thm}\label{thm:sqg-4}
The attractor $X$ has a finite fractal dimension in $\mathcal{D}(\l)$. That is, there exists a finite real number $\tilde M > 0$ depending on the body force $f$ such that 
\[ 
\limsup_{r\to 0}\fr{\log{N_{\mathcal{D}(\l)}(r)}}{\log\left(\fr{1}{r}\right)} \le \tilde M 
\] where $N_{\mathcal{D}(\l)}(r)$ is the minimal number of balls in $\mathcal{D}(\l)$ of radii $r$ needed to cover $X$.
\end{thm}

\section*{Acknowledgements}
Q.L. was partially supported by an AMS-Simons Travel Grant.

\vspace{0.5cm}

{\bf{Data Availability Statement.}} The research does not have any associated data.

\vspace{0.5cm}

{\bf{Conflict of Interest.}} The authors declare that they have no conflict of interest.

\bibliographystyle{plain}
\bibliography{reference}

\begin{thebibliography}{10}

\bibitem{elie}
Elie Abdo.
\newblock On the regularity of solutions to some active scalar equations on bounded domains.
\newblock {\em to appear in Siam Journal on Mathematical Analysis}, 2024.

\bibitem{abdo2021long}
Elie Abdo and Mihaela Ignatova.
\newblock Long time dynamics of a model of electroconvection.
\newblock {\em Transactions of the American Mathematical Society}, 374(8):5849--5875, 2021.

\bibitem{abdo2023long}
Elie Abdo and Mihaela Ignatova.
\newblock Long time dynamics of a fractional dissipative model of electroconvection in bounded domains.
\newblock {\em arXiv preprint arXiv:2309.17423}, 2023.

\bibitem{abdo2024long}
Elie Abdo and Mihaela Ignatova.
\newblock Long time behavior of solutions of an electroconvection model in $\mathbb{R}^2$.
\newblock {\em Journal of Evolution Equations}, 24(1):13, 2024.

\bibitem{brezis2018gagliardo}
Ha{\"\i}m Brezis and Petru Mironescu.
\newblock Gagliardo--nirenberg inequalities and non-inequalities: the full story.
\newblock {\em Annales de l'Institut Henri Poincar{\'e} C, Analyse non lin{\'e}aire}, 35(5):1355--1376, 2018.

\bibitem{caffarelli2016fractional}
Luis~A Caffarelli and Pablo~Ra{\'u}l Stinga.
\newblock Fractional elliptic equations, {C}accioppoli estimates and regularity.
\newblock {\em Annales de l'Institut Henri Poincar{\'e} C, Analyse non lin{\'e}aire}, 33(3):767--807, 2016.

\bibitem{caffarelli2010drift}
Luis~A Caffarelli and Alexis Vasseur.
\newblock Drift diffusion equations with fractional diffusion and the quasi-geostrophic equation.
\newblock {\em Annals of Mathematics}, pages 1903--1930, 2010.

\bibitem{castro2009incompressible}
{\'A}ngel Castro, Diego C{\'o}rdoba, Francisco Gancedo, and Rafael Orive.
\newblock Incompressible flow in porous media with fractional diffusion.
\newblock {\em Nonlinearity}, 22(8):1791, 2009.

\bibitem{chae2015local}
Dongho Chae, Renhui Wan, and Jiahong Wu.
\newblock Local well-posedness for the hall-{MHD} equations with fractional magnetic diffusion.
\newblock {\em Journal of Mathematical Fluid Mechanics}, 17:627--638, 2015.

\bibitem{constantin2001critical}
Peter Constantin, Diego Cordoba, and Jiahong Wu.
\newblock On the critical dissipative quasi-geostrophic equation.
\newblock {\em Indiana University mathematics journal}, pages 97--107, 2001.

\bibitem{constantin2017some}
Peter Constantin, Tarek Elgindi, Mihaela Ignatova, and Vlad Vicol.
\newblock On some electroconvection models.
\newblock {\em Journal of nonlinear science}, 27:197--211, 2017.

\bibitem{constantin1988navier}
Peter Constantin and Ciprian Foias.
\newblock {\em Navier-Stokes Equations}.
\newblock University of Chicago Press, 1988.

\bibitem{constantin2014unique}
Peter Constantin, Nathan Glatt-Holtz, and Vlad Vicol.
\newblock Unique ergodicity for fractionally dissipated, stochastically forced 2{D} {E}uler equations.
\newblock {\em Communications in Mathematical Physics}, 330:819--857, 2014.

\bibitem{constantin2016critical}
Peter Constantin and Mihaela Ignatova.
\newblock Critical {SQG} in bounded domains.
\newblock {\em Annals of PDE}, 2(2):8, 2016.

\bibitem{constantin2017remarks}
Peter Constantin and Mihaela Ignatova.
\newblock Remarks on the fractional laplacian with dirichlet boundary conditions and applications.
\newblock {\em International Mathematics Research Notices}, 2017(6):1653--1673, 2017.

\bibitem{constantin2020estimates}
Peter Constantin and Mihaela Ignatova.
\newblock Estimates near the boundary for critical {SQG}.
\newblock {\em Annals of PDE}, 6:1--30, 2020.

\bibitem{constantin2018inviscid}
Peter Constantin, Mihaela Ignatova, and Huy~Q Nguyen.
\newblock Inviscid limit for {SQG} in bounded domains.
\newblock {\em SIAM Journal on Mathematical Analysis}, 50(6):6196--6207, 2018.

\bibitem{constantin2023global}
Peter Constantin, Mihaela Ignatova, and Quoc-Hung Nguyen.
\newblock Global regularity for critical {SQG} in bounded domains.
\newblock {\em Communications on Pure and Applied Mathematics}, 2023.

\bibitem{constantin1994formation}
Peter Constantin, Andrew~J Majda, and Esteban Tabak.
\newblock Formation of strong fronts in the 2-{D} quasigeostrophic thermal active scalar.
\newblock {\em Nonlinearity}, 7(6):1495, 1994.

\bibitem{constantin1994singular}
Peter Constantin, Andrew~J Majda, and Esteban~G Tabak.
\newblock Singular front formation in a model for quasigeostrophic flow.
\newblock {\em Physics of Fluids}, 6(1):9--11, 1994.

\bibitem{constantin2018local}
Peter Constantin and Huy~Quang Nguyen.
\newblock Local and global strong solutions for {SQG} in bounded domains.
\newblock {\em Physica D: Nonlinear Phenomena}, 376:195--203, 2018.

\bibitem{constantin2015long}
Peter Constantin, Andrei Tarfulea, and Vlad Vicol.
\newblock Long time dynamics of forced critical {SQG}.
\newblock {\em Communications in Mathematical Physics}, 335:93--141, 2015.

\bibitem{constantin2012nonlinear}
Peter Constantin and Vlad Vicol.
\newblock Nonlinear maximum principles for dissipative linear nonlocal operators and applications.
\newblock {\em Geometric And Functional Analysis}, 22:1289--1321, 2012.

\bibitem{constantin1999behavior}
Peter Constantin and Jiahong Wu.
\newblock Behavior of solutions of 2d quasi-geostrophic equations.
\newblock {\em SIAM journal on mathematical analysis}, 30(5):937--948, 1999.

\bibitem{constantin2008regularity}
Peter Constantin and Jiahong Wu.
\newblock Regularity of h{\"o}lder continuous solutions of the supercritical quasi-geostrophic equation.
\newblock {\em Annales de l'Institut Henri Poincar{\'e} C, Analyse non lin{\'e}aire}, 25(6):1103--1110, 2008.

\bibitem{dai2020unique}
Yichen Dai, Ruihong Ji, and Jiahong Wu.
\newblock Unique weak solutions of the magnetohydrodynamic equations with fractional dissipation.
\newblock {\em ZAMM-Journal of Applied Mathematics and Mechanics/Zeitschrift f{\"u}r Angewandte Mathematik und Mechanik}, 100(7):e201900290, 2020.

\bibitem{dai2019class}
Yichen Dai, Zhong Tan, and Jiahong Wu.
\newblock A class of global large solutions to the magnetohydrodynamic equations with fractional dissipation.
\newblock {\em Zeitschrift f{\"u}r angewandte Mathematik und Physik}, 70:1--13, 2019.

\bibitem{di2012hitchhiker}
Eleonora Di~Nezza, Giampiero Palatucci, and Enrico Valdinoci.
\newblock Hitchhiker\rq s guide to the fractional sobolev spaces.
\newblock {\em Bulletin des sciences math{\'e}matiques}, 136(5):521--573, 2012.

\bibitem{dong2018global}
Bo-Qing Dong, Yan Jia, Jingna Li, and Jiahong Wu.
\newblock Global regularity and time decay for the 2{D} magnetohydrodynamic equations with fractional dissipation and partial magnetic diffusion.
\newblock {\em Journal of Mathematical Fluid Mechanics}, 20:1541--1565, 2018.

\bibitem{feng2024stability}
Wen Feng, Weinan Wang, and Jiahong Wu.
\newblock Stability for a system of the 2{D} incompressible mhd equations with fractional dissipation.
\newblock {\em Journal of Mathematical Fluid Mechanics}, 26(4):57, 2024.

\bibitem{ignatova2019construction}
Mihaela Ignatova.
\newblock Construction of solutions of the critical {SQG} equation in bounded domains.
\newblock {\em Advances in Mathematics}, 351:1000--1023, 2019.

\bibitem{kiselev2007global}
Alexander Kiselev, Fedor Nazarov, and Alexander Volberg.
\newblock Global well-posedness for the critical 2{D} dissipative quasi-geostrophic equation.
\newblock {\em Inventiones mathematicae}, 167(3):445--453, 2007.

\bibitem{nguyen2018global}
Huy~Quang Nguyen.
\newblock Global weak solutions for generalized {SQG} in bounded domains.
\newblock {\em Analysis \& PDE}, 11(4):1029--1047, 2018.

\bibitem{stokols2020holder}
Logan~F Stokols and Alexis~F Vasseur.
\newblock H{\"o}lder regularity up to the boundary for critical {SQG} on bounded domains.
\newblock {\em Archive for Rational Mechanics and Analysis}, 236(3):1543--1591, 2020.

\bibitem{wu2014well}
Jiahong Wu and Xiaojing Xu.
\newblock Well-posedness and inviscid limits of the boussinesq equations with fractional laplacian dissipation.
\newblock {\em Nonlinearity}, 27(9):2215, 2014.

\bibitem{wu20182d}
Jiahong Wu, Xiaojing Xu, and Zhuan Ye.
\newblock The 2{D} boussinesq equations with fractional horizontal dissipation and thermal diffusion.
\newblock {\em Journal de Math{\'e}matiques Pures et Appliqu{\'e}es}, 115:187--217, 2018.

\bibitem{yang2014global}
Wanrong Yang, Quansen Jiu, and Jiahong Wu.
\newblock Global well-posedness for a class of 2{D} boussinesq systems with fractional dissipation.
\newblock {\em Journal of Differential Equations}, 257(11):4188--4213, 2014.

\bibitem{yang20183d}
Wanrong Yang, Quansen Jiu, and Jiahong Wu.
\newblock The {3D} incompressible boussinesq equations with fractional partial dissipation.
\newblock {\em Communications in Mathematical Sciences}, 16(3):617--633, 2018.

\bibitem{zelati2016global}
Michele~Coti Zelati and Vlad Vicol.
\newblock On the global regularity for the supercritical {SQG} equation.
\newblock {\em Indiana University Mathematics Journal}, pages 535--552, 2016.

\bibitem{zhou2015fractional}
Yuan Zhou.
\newblock Fractional sobolev extension and imbedding.
\newblock {\em Transactions of the American Mathematical Society}, 367(2):959--979, 2015.

\end{thebibliography}

\end{document}